\begin{document}

\newtheorem{theorem}{Theorem}[section]
\newtheorem{lemma}[theorem]{Lemma}
\newtheorem{proposition}[theorem]{Proposition}
\newtheorem{corollary}[theorem]{Corollary}
\newtheorem{conjecture}[theorem]{Conjecture}

\theoremstyle{definition}
\newtheorem{definition}[theorem]{Definition}
\newtheorem{example}[theorem]{Example}
\newtheorem{remark}[theorem]{Remark}
\newtheorem{condition}[theorem]{Condition}
\newtheorem{assumption}[theorem]{Assumption}
\newtheorem{Notation}[theorem]{Notation}
\newtheorem{question}[theorem]{Question}
\newtheorem{Argument}[theorem]{Argument}

\numberwithin{equation}{section}

\def\s{{\bf s}} 
\def\t{{\bf t}} 
\def\u{{\bf u}} 
\def\x{{\bf x}} 
\def\y{{\bf y}} 
\def\z{{\bf z}} 
\def\B{{\bf B}} 
\def\C{{\bf C}} 
\def\K{{\bf K}}
\def\M{{\bf M}}
\def\Nn{{\bf N}}
\def\G{{\bf \Gamma}} 
\def\W{{\bf W}}
\def\X{{\bf X}}
\def\U{{\bf U}}
\def\V{{\bf V}}
\def\Un{{\bf 1}}
\def\Y{{\bf Y}}
\def\Z{{\bf Z}}
\def\P{{\bf P}}
\def\Q{{\bf Q}}
\def\L{{\bf L}}

\def\cB{{\mathcal{B}}} 
\def\cC{{\mathcal{C}}} 
\def\cD{{\mathcal{D}}} 
\def\cG{{\mathcal{G}}} 
\def\cK{{\mathcal{K}}} 
\def\cL{{\mathcal{L}}} 
\def\cR{{\mathcal{R}}} 
\def\cS{{\mathcal{S}}}
\def\cU{{\mathcal{U}}}
\def\cV{{\mathcal{V}}} 
\def\cX{{\mathcal X}}
\def\cY{{\mathcal Y}}
\def\cZ{{\mathcal Z}}

\def\Ea{E_\a}
\def\eps{{\varepsilon}} 
\def\esp{{\mathbb{E}}} 
\def\Ga{{\Gamma}}

\def\lacc{\left\{}
\def\lcr{\left[}
\def\lpa{\left(}
\def\lva{\left|}
\def\racc{\right\}}
\def\rpa{\right)}
\def\rcr{\right]}
\def\rva{\right|}

\def\prst{{\leq_{st}}}
\def\prost{{\prec_{st}}}
\def\prcvx{{\prec_{cx}}}
\def\Rr{{\bf R}}

\def\CC{{\mathbb{C}}}
\def\EE{{\mathbb{E}}}
\def\NN{{\mathbb{N}}} 
\def\QQ{{\mathbb{Q}}} 
\def\PP{{\mathbb{P}}}
\def\ZZ{{\mathbb{Z}}}
\def\RR{{\mathbb{R}}}

\def\Tt{{\bf \Theta}}
\def\Ttt{{\tilde \Tt}}

\def\A{{\bf A}}
\def\AA{{\mathcal A}}
\def\hAA{{\hat \AA}}
\def\hL{{\hat L}}
\def\hT{{\hat T}}

\def\claw{\stackrel{(d)}{\longrightarrow}}
\def\elaw{\stackrel{(d)}{=}}
\def\pslaw{\stackrel{a.s.}{\longrightarrow}}
\def\qed{\hfill$\square$}

\newcommand*\pFqskip{8mu}
\catcode`,\active
\newcommand*\pFq{\begingroup
        \catcode`\,\active
        \def ,{\mskip\pFqskip\relax}%
        \dopFq
}
\catcode`\,12
\def\dopFq#1#2#3#4#5{%
        {}_{#1}F_{#2}\biggl[\genfrac..{0pt}{}{#3}{#4};#5\biggr]%
        \endgroup
}

\def\ii{{\rm i}}
\def\S{\mathbf{S}}
\def\F{\mathbf{T}}
\def\W{\mathbf{W}}

\title[Stable directed polymer]{The intermediate disorder regime for stable directed polymer in Poisson environment}

\author[M.~Wang]{Min Wang}

\address{School of Mathematics and Statistics, Wuhan University, Wuhan 430072, China }

\email{minwang@whu.edu.cn}

\keywords{Directed polymers ; Stable processes; Chaos expansions; Poisson point process; random environment; intermediate disorder }

\subjclass[2020]{60K37, 60K35, 82D60}

\begin{abstract} We consider the stable directed polymer in Poisson random environment in dimension 1+1, under the intermediate disorder regime. We show that, under a diffusive scaling involving different parameters of the system, the normalized point-to-point partition function of the polymer converges in law to the solution of the stochastic heat equation with fractional Laplacian and Gaussian multiplicative noise.
 
\end{abstract}

\maketitle

\section{Introduction}
The directed polymer model was first introduced by Huse and Henley in the physical literature \cite{HuseHenley} to study interfaces of the Ising model with random impurities, and was first reformulated as the polymer measure in the mathematical literature \cite{ImbrieSpencer} by Imbrie and Spencer. This model is a description of stretched chains subject to random impurities. It has attracted much attention in recent years, because of its connections to the stochastic heat equation and the Kardar-Parisi-Zhang (KPZ) equation. Some nice surveys of polymer models are found in \cite{giacomin2007}, \cite{BergeHDR2019} and \cite{Comets2017}.

In the discrete case, the directed polymer is described by a random probability distribution on the path space of random walks on the $d$-dimensional lattice, while on each site of the lattice, a random variable is placed. The polymer is attracted to or repelled by sites, depending on the sign of the random environment. Plenty of works (\cite{AmirCorwinQuastel2011}, \cite{BergerLacoin2021}, \cite{BergerTorri2019}, \cite{CaravennaSunZygouras2017}, \cite{Comets2006} , \cite{ChenGao2023}, \cite{coscoAIHP2021}, \cite{DeyZygouras2016}, \cite{RANG2020}, \cite{RangSongWang2022}, etc.) can help to understand this discrete model. 
When the discrete random walks are replaced by Brownian motion and the environment becomes a space-time white noise, a continuum directed random polymer was constructed by Alberts, Khanin and Quastel \cite{AKQ12}. Some properties (H\"older continuity and quadratic variation properties and others) of the continuum random polymer were also studied.  

Comets and Yoshida \cite{CometsYoshida2005} firstly studied Brownian directed polymers in Poisson random environment. They proved that, if $d \leq 2$, the strong disorder occurs whenever $\beta \neq 0.$ Miura, Tawara and Tsuchida \cite{MiuraTawaraTsuchida2008} studied symmetric L\'evy directed polymers in Poisson random environment, and gave conditions for strong and weak disorder in terms of the L\'evy exponent. In particular, for a symmetric $\alpha$-stable process, if $d = 1$ and $1< \alpha < 2$, then for any $\beta \neq 0,$ the strong disorder occurs.

In 2014, Alberts, Khanin and Quastel \cite{AlbertsKhaninQuastel2014} firstly considered the intermediate disorder regime, which is a new disorder regime between the weak and strong disorder regimes. Cosco \cite{COSCO2019805} considered the intermediate disorder regime for Brownian directed polymers in Poisson environment. In the present paper, we generalize Cosco's model \cite{COSCO2019805} and consider symmetric $\alpha$-stable (with $\alpha \in (1,2]$) directed polymer in Poisson random environment. We show that, under a diffusive scaling involving different parameters of the system, the normalized point-to-point partition function of the polymer converges in law to the solution of the stochastic heat equation with fractional Laplacian and Gaussian multiplicative noise.

\subsection{The model}   
 Let $((X_t)_{t\geq 0}, \P_{x})$ denote the symmetric $\alpha$-stable process (with $\alpha \in (1,2]$) starting from $x\in \RR$ and denote $\P = \P_{0}$. The environment is a Poisson point process $\omega$ on $[0, \infty) \times \RR^d$ of intensity measure $vdsdx$. We assume that $\omega$ is defined on some probability space $(\Omega, \cG, \Q)$. Denote $\omega_t = \omega_{|[0,t]\times \RR^d}$ the restriction of the environment to times before $t \geq 0$. Fix $r > 0$, let $U(x)$ be the ball of volume $r^d$, centered at $x \in \RR^d$. Define $V_t(X)$ as the tube around path $X$:
$$ V_t(X) = \lacc (s,x): s \in (0, t], x \in U(X) \racc . $$
The polymer measure $\P_t^{\beta, \omega}$ is the Gibbsian probability measure on the space $\cD([0, \infty) \times \RR^d)$ of paths, defined by
$$ d\P_t^{\beta, \omega} = \frac{1}{Z_t(\omega, \beta, r)} \exp (\beta \omega(V_t)) d\P,$$
where $\beta \in \RR^d$ is the inverse temperature parameter, and $Z_t$ is the (point-to-line) partition function of the polymer:
$$Z_t(\omega, \beta, r) = \EE_{\P} [\exp (\beta \omega(V_t))].$$
For any fixed path $X$, we have $$ \EE_{\Q} [\exp (\beta \omega(V_t))]  = \sum_{k=0}^{\infty} e^{\beta k} e^{-v r^d t} \frac{(vr^d t)^k}{k!}  = \exp (\lambda(\beta)v r^d t),$$
 where $$ \lambda(\beta) = e^\beta - 1.$$
Hence, the partition function $Z_t$ has mean $\exp (\lambda(\beta)v r^d t)$.

The normalized partition function
$$ W_t(w, \beta, r) := e^{-\lambda(\beta)vr^d t} Z_t(\omega, \beta, r) $$
is a mean one, positive martingale. 
By Doob's martingale convergence theorem (see \cite[chapter 2, Corollary 2.11]{RevuzYor2005}), there exists a random variable $W_\infty$ such that
$$W_\infty  =\lim_{t \rightarrow \infty} W_t \qquad \text{a.s.} $$
In addition, from Kolmogorov's 0-1 law, we have a dichotomy: 
\begin{center}
    either \; $W_\infty = 0 $ \, a.s. \quad or \quad $ W_\infty > 0$ \, a.s. 
\end{center}
which correspond to strong disorder phase and weak disorder phase, respectively. In particular, we know from \cite{MiuraTawaraTsuchida2008} that when $d = 1$, $\alpha \in (1, 2]$ and $\beta \neq 0$, $$ W_\infty = 0, \, \, a.s. $$

To further describe the point-to-line partition function, we study the collection of paths started at $(0,0)$ and ended at $(t,x)$ through the point-to-point partition function.
 Let $p (t,x)$ denote the transition density of a symmetric $\alpha$-stable L\'evy process, whose characteristic function is $$ \int_{-\infty}^{+\infty} e^{ixz} p(t,x)dx = \exp \lpa - \nu t |z|^\alpha \rpa ,$$ where $\nu > 0$ is a fixed parameter.  
By self-similarity, the transition density satisfies the scaling transform
$$ t^{1/\alpha} p (tT, t^{1/\alpha} X ) = p(T,X). $$
We can now introduce the point-to-point partition function $$Z_{t,x} (\omega, \beta, r) = p(t,x) \EE_{\P} [\exp (\beta \omega(V_t)) | X_t = x] ,   $$ and the normalized point-to-point partition function: $$W_{t,x} (\omega, \beta, r) = p(t,x) e^{-\lambda(\beta)vr^d t} \EE_{\P} [\exp (\beta \omega(V_t)) |X_t = x ], $$
from which we can recover the point-to-line partition function
$$ Z_t (\omega, \beta, r) = \int_\RR Z_{t,x} (\omega, \beta, r)  dx,$$
and the normalized point-to-line partition function
$$ W_t (\omega, \beta, r) = \int_\RR W_{t,x} (\omega, \beta, r)  dx.$$

We stress that the random environment $\omega$ is contained in (normalized) point-to-line and point-to-point partition function, so that they are random variables on the probability space  $(\Omega, \cG, \Q)$. The partition functions are related to free energies in statistic mechanics, which contains a great amount of information. Many works have been done around the partition functions. In this paper, we will make further contributions in this direction.

\subsection{Main results}  
We will show similar results with those in Cosco \cite{COSCO2019805}. The following three assumptions and two theorems are the same as those in Cosco \cite{COSCO2019805} by letting $\alpha = 2.$ We prove here that it holds for $\alpha \in (1,2].$ 
\smallskip

\noindent
\textbf{Assumptions:} When $t\rightarrow \infty, $
\begin{itemize}
    \item[(a)] $ v_t r_t^2 \lambda(\beta_t)^2  \sim  (\beta^*)^2 \; t^{-(1-1/\alpha)}, \; \text{for some non-zero constant $\beta^*$},  $
    \item[(b)]  $ v_t r_t^{\alpha+1} \lambda(\beta_t)^{\alpha+1}  \rightarrow  0,$
    \item[(c)] $  \frac{r_t}{t^{1/\alpha}} \rightarrow  0. $ 
\end{itemize}

\begin{theorem}[Convergence of the normalized point-to-line partition function]
\label{main}
    Under assumptions (a)(b)(c), and as $t\rightarrow \infty$, 
    $$W_t (\omega^{v_t}, \beta_t, r_t)  \claw  \cZ_{\alpha, \beta^*},$$
    where $\omega^{v_t}$
is the Poisson point process with intensity measure $v_t dsdx$, $\cZ_{\alpha, \beta^*}$ is the random variable defined by (\ref{formulaZab}) in the next section.
\end{theorem}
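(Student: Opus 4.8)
The plan is to prove Theorem~\ref{main} by a chaos-expansion argument in the spirit of Cosco~\cite{COSCO2019805}, carried out with the symmetric $\alpha$-stable kernel $p$ in place of the Gaussian heat kernel. \emph{First}, I would expand the normalized partition function in the Poisson environment. Writing
$$\exp(\beta\,\omega(V_t)) \;=\; \prod_{(s,y)\in\omega}\bigl(1+\lambda(\beta)\,\mathbf{1}_{(s,y)\in V_t(X)}\bigr),$$
expanding the product over finite subsets of the points of $\omega$, taking the quenched path expectation (with the endpoint integrated out, as befits the point-to-line functional) and normalizing, one obtains
$$W_t \;=\; \sum_{k=0}^{\infty}\Psi_k^{(t)},$$
where $\Psi_k^{(t)}$ is the $k$-fold integral against the compensated measure $\hat\omega=\omega-v\,ds\,dy$, whose kernel is the time-ordered product of constrained transition densities $\int_{U(y_i)}p(s_i-s_{i-1},\,z-y_{i-1})\,dz$ over the simplex $0<s_1<\cdots<s_k\le t$. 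The limit $\cZ_{\alpha,\beta^*}$ of \eqref{formulaZab} is the analogous Wiener-chaos series in which $\hat\omega$ is replaced by space-time white noise and the kernels are built from $p$; it is the chaos representation of the mild solution of the fractional stochastic heat equation.

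\emph{Second}, I would perform the $L^2(\Q)$ analysis term by term. Squaring $\Psi_k^{(t)}$, integrating against $v^k\prod ds_i\,dy_i$, and changing to increment variables turns the spatial integral into a product of one-variable factors $\int_\RR p(\delta,u)^2\,du$, which by Parseval and the self-similarity $t^{1/\alpha}p(tT,t^{1/\alpha}X)=p(T,X)$ equals $C\,\delta^{-1/\alpha}$. Hence
$$\|\Psi_k^{(t)}\|_2^2 \;\asymp\; \bigl(\lambda^2 v r^2 C\bigr)^k \int_{0<s_1<\cdots<s_k\le t}\prod_{i=1}^{k}(s_i-s_{i-1})^{-1/\alpha}\,d\vec s,$$
and for $k=1$ this is of order $\lambda^2 v r^2\,t^{1-1/\alpha}$. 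Assumption (a) is exactly the requirement that this tend to $(\beta^*)^2$ times the first Wiener-chaos norm, and the same scaling makes every $\|\Psi_k^{(t)}\|_2^2$ converge to the $k$-th Wiener-chaos norm of $\cZ_{\alpha,\beta^*}$; assumption (c), $r/t^{1/\alpha}\to0$, guarantees that $\int_{U(y)}p(s,z)\,dz$ behaves like $r\,p(s,y)$ so that the point interaction becomes local at the diffusive scale and the kernels converge to the continuum stable kernels. The ordered-simplex integral above is of Dirichlet type and of order $t^{k(1-1/\alpha)}\,[\Gamma(1-1/\alpha)]^k/\Gamma\!\bigl(k(1-1/\alpha)+1\bigr)$, which combined with (a) yields a Mittag--Leffler-type bound $\|\Psi_k^{(t)}\|_2^2\lesssim A^k/\Gamma\!\bigl(k(1-1/\alpha)+1\bigr)$ that is uniform in $t$.

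\emph{Third}, I would control the departure of the Poisson model from its Gaussian limit, which is where assumption (b) enters. Because $\hat\omega$ is not Gaussian, its higher cumulants produce, at each order $j\ge3$, spurious contributions of the form $\lambda^j v r^j\int_0^t\!\int_\RR p(s,y)^j\,dy\,ds$. By self-similarity $\int_\RR p(s,y)^j\,dy\asymp s^{-(j-1)/\alpha}$; since $\alpha\in(1,2]$ this time integral is singular at $0$ and must be cut off at the natural scale $s\asymp r^\alpha$, producing a factor $r^{\alpha-(j-1)}$ and hence a bound of order $v\,r^{\alpha+1}\lambda^j$. As $\lambda=\lambda(\beta_t)\to0$ in this regime and $\alpha\le2$, one has $\lambda^j\le\lambda^3\le\lambda^{\alpha+1}$ for every $j\ge3$, so all these terms are dominated by $v\,r^{\alpha+1}\lambda^{\alpha+1}$, which tends to $0$ by (b). This is the Lindeberg-type Poisson-to-Gaussian replacement, and it is precisely what forces the exponent $\alpha+1$ in (b); concretely, it guarantees that the contraction norms entering the central limit theorem for multiple integrals against $\hat\omega$ vanish, so that the finite vector $(\Psi_1^{(t)},\dots,\Psi_K^{(t)})$ converges in law to the corresponding Gaussian chaoses of $\cZ_{\alpha,\beta^*}$.

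\emph{Finally}, I would assemble the pieces by truncation: for each fixed $K$ the finite sum $\sum_{k\le K}\Psi_k^{(t)}$ converges in law to the truncation $\cZ_{\alpha,\beta^*}^{\le K}$ (Steps~2--3), $\cZ_{\alpha,\beta^*}^{\le K}\to\cZ_{\alpha,\beta^*}$ in $L^2$ as $K\to\infty$, and the uniform Mittag--Leffler bound gives $\sup_t\sum_{k>K}\|\Psi_k^{(t)}\|_2^2\to0$; a three-$\varepsilon$ argument then yields $W_t\claw\cZ_{\alpha,\beta^*}$. I expect the main obstacle to lie in Step~3 in the genuinely stable regime $\alpha<2$: unlike the Gaussian kernel, $p$ is not explicit and has heavy polynomial tails, so the $L^p$ integrals $\int_\RR p(s,y)^j\,dy$, the continuum convergence of the kernels, and the uniform control of the higher chaoses all have to be obtained through self-similarity together with sharp two-sided stable-density bounds, and the borderline time-integrability for $j\ge3$ is exactly what pins down the UV cutoff at scale $r^\alpha$ and the form of assumption (b).
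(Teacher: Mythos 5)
Your Steps 1, 2 and 4 track the paper's own proof quite closely: the expansion over finite collections of Poisson points is (after the normalization) exactly the Wiener--It\^o chaos expansion of Proposition \ref{WienerIto}, your term-by-term $L^2$ computation with the scaling $\int_\RR p(\delta,u)^2du = C\delta^{-1/\alpha}$ and the Dirichlet/Beta integral is the content of Lemma \ref{lemmabound} and Lemma \ref{ConvergenceLemma}, and your truncation with the uniform Mittag--Leffler bound is Corollary \ref{CoroConvergence} combined with Lemma \ref{diagram}. The genuine gap is in your Step 3, which is precisely the step where the Poisson-to-Gaussian convergence in law must be proved, and it is a gap for two concrete reasons. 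First, the tool you invoke does not match the statement you need: vanishing-contraction (fourth-moment, Lindeberg-type) criteria for multiple Poisson integrals yield convergence to \emph{normal} laws, whereas here the limit of the $k$-th chaos is the multiple Wiener integral $I_k(g)$, which for $k\geq 2$ is not Gaussian; there is no off-the-shelf contraction criterion for convergence of a Poisson chaos of order $k$ to a Gaussian chaos of the same order. The paper fills exactly this hole elementarily (Proposition \ref{k-thItemConvergence}): for $k=1$ it computes the characteristic function via the Poisson exponential formula and Lemma \ref{IneqExp}; for $k\geq 2$ it first takes kernels $g=g^1\otimes\cdots\otimes g^k$ with pairwise disjoint supports, for which $\bar{\omega}_t^{(k)}(\tilde{g}_t)$ factorizes into a product of first-order integrals, and then extends to general $g\in L^2$ by density, using the uniform-in-$t$ $L^2$ isometry (Proposition \ref{Covariancerelation}) and the diagram Lemma \ref{diagram}. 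To keep your Step 3 you would have to either reproduce this factorization-plus-density argument or locate and verify a genuinely applicable chaos-to-chaos limit theorem.

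Second, your cumulant estimate rests on the claim that $\lambda(\beta_t)\to 0$, which does \emph{not} follow from assumptions (a)(b)(c). Take for instance $\lambda(\beta_t)=t^{\epsilon}$, $r_t=t^{1/\alpha-2\epsilon}$, and $v_t$ determined by (a); then (c) holds, and (b) holds because $v_tr_t^{\alpha+1}\lambda(\beta_t)^{\alpha+1}=v_tr_t^2\lambda(\beta_t)^2\,(r_t\lambda(\beta_t))^{\alpha-1}\sim(\beta^*)^2t^{-\epsilon(\alpha-1)}\to 0$, yet $\lambda(\beta_t)\to\infty$. In this admissible regime your bound $\lambda^j\leq\lambda^{\alpha+1}$ fails, and in fact the third cumulant of the first chaos is of order $v_t\lambda(\beta_t)^3r_t^{\alpha+1}\asymp t^{\epsilon(3-2\alpha)}$, which diverges for $\alpha<3/2$: higher moments of the chaoses genuinely blow up, so \emph{any} moment- or cumulant-based central limit argument is inapplicable there, although the theorem remains true. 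The paper's proof is immune to this because assumption (b) enters only through $\gamma_t\to 0$ (with $\gamma_t$ as in (\ref{gamma_t})), while assumption (a) alone gives $t^{1+1/\alpha}v_t\gamma_t^2=O(1)$; the characteristic-function computation then uses only the elementary bound $|e^{iz}-1-iz|\leq z^2/2$ and dominated convergence, and never any moment beyond the second. Your UV-cutoff heuristic for the exponent $\alpha+1$ in (b) is appealing, but as an actual proof mechanism it must be replaced by the $\gamma_t\to 0$ route (or supplemented by the extra hypothesis that $\lambda(\beta_t)$ stays bounded, which the theorem does not assume).
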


\begin{theorem}[Convergence of the normalized point-to-point partition function]
\label{main2} Let $T > 0$ and $Y \in \RR$.
    Under assumptions (a)(b)(c), and as $t\rightarrow \infty$,
    $$ t^{1/\alpha} W_{tT, t^{1/\alpha} Y}(\omega^{v_{tT}}, \beta_{tT}, r_{tT})  \claw  \cZ_{\alpha, \beta^*/T}(T,Y),$$
    where $\cZ_{\alpha, \beta}(T,Y)$ is the random variable defined by (\ref{ EQsolutionSHElong} ) and (\ref{EQsolutionSHE}) in the next section.
\end{theorem}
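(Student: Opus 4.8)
The plan is to prove the convergence in law through a compensated chaos (polynomial) expansion, running the argument of Theorem~\ref{main} but now conditioned on the terminal point $X_{tT}=t^{1/\alpha}Y$. Writing $\hat\omega=\omega^{v_{tT}}-v_{tT}\,ds\,dx$ for the compensated Poisson measure and expanding the multiplicative functional atom by atom via $e^{\beta\mathbf 1_{V}}=1+\lambda(\beta)\mathbf 1_{V}$, the normalization $e^{-\lambda(\beta_{tT})v_{tT}r_{tT}^{d}tT}$ turns the factorial-moment series into a Wick-ordered one, and the Markov property of the $\alpha$-stable bridge yields
\[
t^{1/\alpha}W_{tT,\,t^{1/\alpha}Y}=\sum_{k\ge 0}t^{1/\alpha}\,\lambda(\beta_{tT})^{k}\int_{0<s_1<\cdots<s_k<tT}\int_{\RR^{k}}\Phi_k(\mathbf{s},\mathbf{y})\,\hat\omega(ds_1dy_1)\cdots\hat\omega(ds_kdy_k),
\]
where $\Phi_k$ is the product of stable densities $p(s_1,y_1)p(s_2-s_1,y_2-y_1)\cdots p(tT-s_k,\,t^{1/\alpha}Y-y_k)$ integrated against the tube indicators $\mathbf 1_{U(\cdot)}$ of volume $r_{tT}$ (each contributing a factor $\sim r_{tT}$ as $r_{tT}\to0$). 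The $k=0$ term is the annealed mean $t^{1/\alpha}p(tT,t^{1/\alpha}Y)=p(T,Y)$ by self-similarity, already matching the $k=0$ term of $\cZ_{\alpha,\beta^*/T}(T,Y)$.

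First I would perform the diffusive rescaling $s_i=t\sigma_i$, $y_i=t^{1/\alpha}\eta_i$ and apply $p(ts,t^{1/\alpha}y)=t^{-1/\alpha}p(s,y)$ to each of the $k+1$ density factors. Tracking the Jacobians $t^{k(1+1/\alpha)}$ against the factor $t^{-2(k+1)/\alpha}$ from the squared kernel collapses all the prefactors into a single quantity raised to the $k$-th power, namely $v_{tT}\,r_{tT}^{2}\,\lambda(\beta_{tT})^{2}\,t^{1-1/\alpha}$, one factor per chaos level. Substituting Assumption~(a) evaluated at time $tT$ sends this quantity to $(\beta^*)^{2}T^{-(1-1/\alpha)}$, which by the definition (\ref{EQsolutionSHE}) is exactly the per-level Wiener-chaos variance of $\cZ_{\alpha,\beta^*/T}(T,Y)$. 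Simultaneously, Assumption~(c), $r_{tT}/(tT)^{1/\alpha}\to0$, forces the tube-integrated kernel to converge in $L^{2}$ to the continuum bridge kernel, the normalized tube indicator acting in the limit as a point interaction on the path.

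With the kernels and variances under control, the crux is a central limit theorem for multiple Poisson integrals: I would show that the finitely many low-order rescaled chaoses converge jointly in law to the corresponding multiple Wiener integrals against space-time white noise. Beyond the $L^{2}$ convergence of the kernels (secured by (c)), such a theorem requires the non-Gaussian contraction/diagonal terms of the Poisson chaos to vanish; this is precisely the role of Assumption~(b), $v_{tT}\,r_{tT}^{\alpha+1}\,\lambda(\beta_{tT})^{\alpha+1}\to0$, which acts as a Lindeberg-type third-moment condition on the elementary noise cells and reduces to the cube $r^{3}$ familiar from the Brownian case $\alpha=2$. One then passes from the low-order joint convergence to convergence of the full series by a truncation argument, provided the high-order tail is uniformly negligible.

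The hard part will be this last point: a tail bound on the $L^{2}$ norm of the chaoses of order exceeding $K$ that is summable in $K$ and uniform in $t$. It is more delicate here than for the point-to-line statement because of the endpoint conditioning, the rescaled $k$-th variance being an iterated space-time convolution of squared stable densities terminating in the singular bridge factor $p(T-\sigma_k,\,Y-\eta_k)^{2}$, for which no closed form exists when $\alpha\in(1,2)$. I would control it through the sharp two-sided bound $p(s,y)\asymp s^{-1/\alpha}\wedge s|y|^{-1-\alpha}$, whence $\int_{\RR}p(s,y)^{2}\,dy\asymp s^{-1/\alpha}$; integrating out the spatial variables reduces each level to an iterated time integral of Beta type, of order $C^{k}/\Gamma\!\big(1+k(1-1/\alpha)\big)$. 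The exponent $1-1/\alpha>0$, available exactly because $\alpha>1$, renders the singularity $s^{-1/\alpha}$ and the terminal endpoint singularity integrable and makes the resulting series summable; the same Gamma-function decay bounds the limiting chaos series and, matched uniformly against the Poisson side, supplies the required tail estimate. Keeping this terminal singularity under control uniformly in $t$ is the principal technical obstacle.
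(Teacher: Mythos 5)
Your skeleton coincides with the paper's proof of Theorem \ref{main2}: a Wiener--It\^o chaos expansion of the point-to-point partition function with tube kernels (your atom-by-atom expansion $e^{\beta {\bf 1}_V}=1+\lambda(\beta){\bf 1}_V$ produces exactly the kernels $\lambda(\beta_{tT})^k\,\EE_\P[\prod_i\chi^{r_{tT}}_{s_i,y_i}(X)\,|\,X_{tT}=t^{1/\alpha}Y]\,p(tT,t^{1/\alpha}Y)$ that the paper derives from the difference operator), a per-level central limit theorem taking compensated Poisson chaoses to multiple Wiener integrals in which Assumption (b) kills the non-Gaussian remainder, an $L^2$ replacement of tube kernels by bridge kernels using Assumption (c), and a uniform-in-$t$ summable tail bound from $\int_\RR p(s,y)^2dy\leq M_\alpha s^{-1/\alpha}$ plus the Beta integral, yielding the $C^k/\Gamma(1+k(1-1/\alpha))$ decay. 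These correspond to items (v), (i)--(ii), (iii)--(iv) of the paper's proof and the estimates imported from Proposition \ref{SolutionSHE} and Lemma \ref{ConvergenceLemma}.

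There is, however, a genuine gap at the decisive step, the identification of the limit law. Your variance bookkeeping is right: the per-level factor is $v_{tT}r_{tT}^2\lambda(\beta_{tT})^2t^{1-1/\alpha}\to(\beta^*)^2T^{-(1-1/\alpha)}$. But your next assertion, that this ``is exactly the per-level Wiener-chaos variance of $\cZ_{\alpha,\beta^*/T}(T,Y)$,'' is false: by definition (\ref{EQsolutionSHE}), the $k$-th chaos of $\cZ_{\alpha,\beta^*/T}(T,Y)$ is $I_{k,T}\lpa(\beta^*/T)^kp^k(\cdot,\cdot;T,Y)\rpa$, whose per-level variance factor is $(\beta^*/T)^2=(\beta^*)^2T^{-2}$, and $T^{-(1-1/\alpha)}\neq T^{-2}$ for every $\alpha\in(1,2]$ unless $T=1$. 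Carried to completion, your computation identifies the limit as $\sum_k I_{k,T}\lpa(\beta^*T^{-(\alpha-1)/(2\alpha)})^kp^k(\cdot,\cdot;T,Y)\rpa=\cZ_{\alpha,\,\beta^*T^{-(\alpha-1)/(2\alpha)}}(T,Y)$, which is not the object named in the statement, so the sentence asserting the match is precisely the step that fails and nothing else in the proposal repairs it. Note where the two arguments diverge: the paper distributes the powers of $T$ between its per-level convergence (item (i), with prefactor $T^{k(1+1/\alpha)}$) and its kernel limit $\psi^k_t\to T^{k/\alpha}(\beta^*)^kp^k(\cdot,\cdot;T,Y)$ (item (iv)), and these combine to give the coupling $\beta^*/T$; your single collapsed factor is instead what a direct variance computation produces, and the two bookkeepings disagree by $T^{k(1+1/\alpha)}$ at level $k$ (indeed, one can check that the variance of the left-hand side of the paper's item (i) converges to $T^{k(1+1/\alpha)}\|g\|^2$ rather than $\|g\|^2$). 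To have a proof you must carry out this $T$-bookkeeping explicitly and resolve the discrepancy --- either by locating an error in your scaling or by concluding that the coupling constant in the identification must be different from $\beta^*/T$; as written, convergence to $\cZ_{\alpha,\beta^*/T}(T,Y)$ is not established.
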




 In the case that $\alpha = 2$, the strict positivity of $\cZ_{2, \beta}(T,Y)$ was proved by Mueller \cite{Mueller1991} in 1991 and a new proof was given by Flores \cite{Flores2014} in 2014. This theorem gained new attention due to the links among the stochastic heat  equation, the continuum directed polymer, and the KPZ equation (see \cite{AmirCorwinQuastel2011} for their relations, see \cite{Corwin2012}, \cite{Corwin2016}, \cite{JeremyKPZ2012} and \cite{Jeremy2015} for reviews on the KPZ universality class and the KPZ equation). If the solution of the stochastic heat  equation is strictly positive, then by the so-called Hopf-cole transformation, $ \ln \cZ_{\alpha, \beta}(T,Y)$ is the solution of a related KPZ equation. The strict positivity of 
$\cZ_{\alpha, \beta}(T,Y)$ for $\alpha \in (1,2)$ is an interesting open question, we left it for future research. 

Our proof is based on the theory of the Wiener integral and the Wiener--Itô integral with respect to Poisson process, especially the Wiener--Itô chaos expansion. 

The rest of the paper is organized as follows. In Section 2, we give some preliminaries, including the stochastic heat equation, the Wiener integral and the Wiener--Itô chaos expansion. We also state some useful propositions that will be used for the proofs of the main theorems. In Section 3, we prove our main results.

\section{Preliminaries}

\subsection{The stochastic heat equation}   
Consider the following stochastic heat equation:
\begin{equation}
\label{FSHE}
    \partial_t \cZ(t,x) = -\nu  (-\Delta)^{\alpha/2} \cZ(t,x)  + \beta \cZ(t,x) \xi(t,x),   \quad \cZ(0,x) = \delta_0(x),
\end{equation}
where $\beta > 0$ is a constant, and $\xi$ is a Gaussian noise with the covariance 
\begin{equation}
    \text{Cov} (\xi(t,x), \xi(s,y)) = \delta_0(t-s) \delta_0(x-y).
\end{equation}
A space-time white noise can be realized as the distributional mixed derivative of a two-sided Brownian sheet $W := \lacc W(t,x) \racc _{t\geq 0, x \in \RR}$, i.e.
$$ \xi (t,x) = \frac{\partial^2 W(t,x)}{\partial t \partial x}.$$
Here, $W$ is a mean-zero Gaussian process whose covariance function  is 
$$  \text{Cov} (W(t,x), W(s,y)) = \min (s,t)\times \min(|x|,|y|) \times {\bf 1}_{(0, \infty)}(xy), $$
for all $s, t > 0$ and $x, y \in \RR$.

Wiener integral uses the white noise to map elements of $L^2([0,t]\times \RR)$ into Gaussian random variables in a way that preserves the Hilbert space structure of the $L^2$ space. For $g \in L^2([0,t]\times \RR)$, we fix the notation
\begin{equation*}
    I_{1,t}(g) := \int_{[0,t] \times \RR} g(s, x) \xi(s, x)dsdx.
\end{equation*}
We refer to \cite{Janson1997gaussian} for a more detailed definition. Multiple Wiener integrals $I_{k,t}$, for $ k > 1, t>0,$ can be defined as 
\begin{equation*}
    I_{k,t}(g) := \int_{\Delta_k(t) \times \RR^k} g(\s, \x) \xi^{\otimes k }(\s, \x) d\s d\x,
\end{equation*}
for all $g \in L^2(\Delta_k(t) \times \RR^k),$ 
where $\s = (s_1, \cdots, s_k), \x =  (x_1, \cdots, x_k)$, $s_1, \cdots, s_k \in \RR_+,   x_1, \cdots, x_k \in \RR, $ and 
$ \Delta_k(t) = \lacc \s \in [0,t]^k \, | \, 0 < s_1 < \cdots < s_k \leq t  \racc.$

It is known that $(s,t,y,x) \rightarrow p(t - s, x -y) $ is the fundamental solution to the heat operator $(\partial/\partial t) -  ( -\nu  (-\Delta)^{\alpha/2}) $. According to
the theory of Walsh \cite{Walsh1986}, we may write equation (\ref{FSHE}) in mild form as follows
\begin{equation}
\label{DuhamelForm}
    \cZ (t,x) = \int_{\RR} p(t, x-y) \cZ(0, y)dy + \beta \int_0^t \int_\RR p(t-s, x-y) \cZ (s,y )\xi(s,y)dyds.
\end{equation}
This progressively measurable process $\cZ(t,x)$ is called a mild solution of equation (\ref{FSHE}) if it is well-defined.
By replacing $\cZ(0, y)$ with $\delta_0(y)$ and iterating (\ref{DuhamelForm}) multiple times,  we derive a Wiener chaos expansion of $\cZ(t,x)$, written by
\begin{equation}
\label{ EQsolutionSHElong}
\cZ_{\alpha, \beta} (t,x) = p(t,x) + \sum_{k = 1}^{\infty} \beta^k 
\int_{\Delta_k(t)} \int_{\RR^k} p(t-s_k, x - x_k) \prod_{i=1}^k  p(s_i - s_{i-1}, x_i - x_{i-1})\xi(s_i, x_i) dx_i ds_i,
 \end{equation}
where $ s_0 = 0$ and $ \Delta_k(t) $ is defined as above.
In order to use a more succinct notation, for $k > 0$ we  denote by $p^k$ the multi-step $\alpha$-stable transition function
$$p^k (\s,\x) = \prod_{j=1}^{k} p (s_{j}-s_{j-1}, x_{j} - x_{j-1}), $$
and for fixed $t \in \RR_+$ and $ x \in \RR$ we define
 $$ p^k(\s,\x; t,x) =  p^k(\s,\x)p(t-s_k, x - x_k), $$
where  $(\s, \x) \in \Delta_k(t) \times \RR^k$ and $(s_0, x_0) = {\bf 0}$.

\begin{proposition}
    \label{SolutionSHE}
    If $\cZ(0,x) = \delta_0(x)$, then there exists a unique mild solution to equation (\ref{FSHE}) with initial data $\delta_0(x)$. The solution can be written by 
    \begin{equation}
    \label{EQsolutionSHE}
 \cZ_{\alpha, \beta}(t,x) = \sum_{k = 0}^{\infty}  I_{k,t}( \beta^k p^k(\cdot,\cdot; t,x) ),
    \end{equation}
where $p^0(\cdot,\cdot; t,x) = p(t,x)$ and $I_{0,t}(f) = f.$
\end{proposition}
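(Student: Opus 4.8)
The plan is to construct the solution by Picard iteration of the mild equation (\ref{DuhamelForm}) with $\cZ(0,\cdot)=\delta_0$, to identify the iterates with the partial sums of (\ref{EQsolutionSHE}), and to establish $L^2$ convergence of the resulting series through one kernel estimate; uniqueness then follows from the same estimate. First I would set $\cZ_0(t,x)=p(t,x)$ and define recursively
\begin{equation*}
\cZ_{n+1}(t,x)=p(t,x)+\beta\int_0^t\int_\RR p(t-s,x-y)\,\cZ_n(s,y)\,\xi(s,y)\,dy\,ds.
\end{equation*}
Unfolding the recursion and using $p^k(\s,\x;t,x)=\prod_{j=1}^k p(s_j-s_{j-1},x_j-x_{j-1})\,p(t-s_k,x-x_k)$ shows that $\cZ_n(t,x)=\sum_{k=0}^{n} I_{k,t}\lpa\beta^k p^k(\cdot,\cdot;t,x)\rpa$, so the iterates are exactly the partial sums of (\ref{EQsolutionSHE}). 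Since multiple Wiener integrals of distinct orders are orthogonal and $I_{k,t}$ is an isometry onto the $k$-th chaos, it suffices to prove
\begin{equation*}
\sum_{k=0}^\infty \beta^{2k}\,\big\| p^k(\cdot,\cdot;t,x)\big\|_{L^2(\Delta_k(t)\times\RR^k)}^2<\infty,
\end{equation*}
which yields convergence of the series in $L^2$ of the white-noise probability space to a limit I call $\cZ_{\alpha,\beta}(t,x)$.

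The heart of the argument is the estimate of these $L^2$ norms. By self-similarity of the stable density one has $\int_\RR p(\tau,y)^2\,dy=c_\alpha\,\tau^{-1/\alpha}$ and $\|p(\tau,\cdot)\|_\infty=c_\alpha'\,\tau^{-1/\alpha}$, where $c_\alpha,c_\alpha'$ are finite because $p(1,\cdot)$ is bounded and square integrable (its Fourier transform $e^{-\nu|z|^\alpha}$ lies in $L^1\cap L^2$). Bounding $p(\tau,y)^2\le\|p(\tau,\cdot)\|_\infty\,p(\tau,y)$ for each factor and then integrating out the spatial variables by Chapman--Kolmogorov reduces the spatial integral to $p(t,x)$ times a product of the factors $(s_j-s_{j-1})^{-1/\alpha}$ and $(t-s_k)^{-1/\alpha}$. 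The remaining integral over the time simplex is of Dirichlet/Beta type: with $a:=1-1/\alpha>0$,
\begin{equation*}
\int_{\Delta_k(t)}(t-s_k)^{-1/\alpha}\prod_{j=1}^k(s_j-s_{j-1})^{-1/\alpha}\,d\s
= t^{(k+1)a-1}\,\frac{\Gamma(a)^{k+1}}{\Gamma((k+1)a)}.
\end{equation*}
Hence for $k\ge1$ the $k$-th variance is bounded by $C^{k+1}\,p(t,x)\,t^{(k+1)a-1}\,\Gamma(a)^{k+1}/\Gamma((k+1)a)$, and since $\Gamma((k+1)a)$ grows super-exponentially while the numerator grows only geometrically in $k$, the series converges. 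This is precisely where the restriction $\alpha>1$ is essential: it is what makes $a>0$ and the time-simplex integral finite.

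Finally I would verify that the $L^2$ limit $\cZ_{\alpha,\beta}(t,x)$ solves (\ref{DuhamelForm}): passing to the limit in the Picard recursion is justified by the $L^2$ convergence together with the isometry of the stochastic integral (a stochastic Fubini step moves the integration against $\xi$ through the limit). Uniqueness follows because any two mild solutions must have identical Wiener chaos decompositions — matching orders in (\ref{DuhamelForm}) forces the $k$-th chaos kernel of any solution to equal $\beta^k p^k(\cdot,\cdot;t,x)$ — or, equivalently, the difference of two solutions obeys the homogeneous version of (\ref{DuhamelForm}) and is controlled to zero by the same kernel estimate through a Gronwall-type iteration. I expect the kernel estimate of the third paragraph to be the main obstacle, both because the stable density has no closed form (forcing reliance on scaling and the sup bound rather than explicit computation) and because the time-simplex integral must be handled uniformly in $k$ to extract the decisive $\Gamma((k+1)a)$ denominator.
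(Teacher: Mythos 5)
Your construction of the solution is essentially the paper's own argument: the paper also runs a Picard iteration (starting from $\cZ_0\equiv 0$ rather than $\cZ_0=p$, a trivial shift), identifies the increments with the multiple Wiener integrals $I_{k,t}(\beta^k p^k(\cdot,\cdot;t,x))$, and sums the variances using the sup bound $M_\alpha=\max_{x}p(1,x)$ together with the Beta/Dirichlet integral to produce the decisive factor $\Gamma(1-1/\alpha)^{k+1}/\Gamma\lpa(k+1)(1-1/\alpha)\rpa$, exactly as in your third paragraph. Your way of doing the spatial integration, bounding $p(\tau,y)^2\le\|p(\tau,\cdot)\|_\infty\,p(\tau,y)$ and then applying Chapman--Kolmogorov (which leaves the harmless prefactor $p(t,x)$), is if anything a cleaner justification of the inequality that the paper simply displays; both versions give the same super-exponential decay and hence $L^2$ convergence of the chaos series.

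The divergence is uniqueness, and there your proposal has a genuine gap. The paper does not prove uniqueness: it quotes existence and uniqueness from \cite[Section 4]{CJKS14} and only supplies the explicit formula. Your Gronwall-type iteration needs an a priori bound of the form $\sup_{s\in[0,t],\,y\in\RR}\EE[(\Delta\cZ(s,y))^2]<\infty$ for the difference of two mild solutions, and with initial datum $\delta_0$ this is not available for free: already the zeroth chaos of any solution contributes $p(s,y)^2$ to its second moment, and $\sup_y p(s,y)^2=M_\alpha^2 s^{-2/\alpha}\to\infty$ as $s\downarrow 0$, while the definition of a mild solution gives no uniform control whatsoever on the moments of an \emph{arbitrary} solution. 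So the iteration cannot close as stated; one must either work in a weighted-in-time norm or restrict a priori to a class of solutions with prescribed moment growth, i.e.\ the uniqueness assertion needs a solution class that you never specify. (This is precisely the point at which the draft proof appended after the end of the paper also stalls, with its unproven bound (\ref{uniformbounded}).) Your alternative route, matching Wiener chaos kernels order by order, is the more promising one — it does work for adapted square-integrable solutions, using the fact that the Walsh integral maps the $(n-1)$-st chaos into the $n$-th — but the phrase ``or, equivalently'' is not right: the two arguments are not equivalent, and the chaos-matching argument likewise requires the solution class and the chaos-shift property to be spelled out before it is a proof.
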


\proof
The uniqueness and existence of mild solutions for equation (\ref{FSHE}) with $\alpha \in (1,2]$ was proved as a special case in \cite[Section 4]{CJKS14}.
We use the Picard iteration to find the explicit expression of the solution. Let $\cZ_0(t,x) \equiv 0.$ For $k \geq 0,$
define 
$$ \cZ_{k+1} (t,x) = \int_{\RR} p(t, x-y) \cZ(0, y)dy + \beta \int_0^t \int_\RR p(t-s, x-y) \cZ_k (s,y )\xi(s,y)dyds, $$
and 
$$ \bar{\cZ}_{k} (t,x) = \cZ_{k+1} (t,x) - \cZ_{k} (t,x).$$
Then these processes are progressively measurable by construction, $\bar{\cZ}_{0} (t,x) = p(t,x), $ and for $k \geq 1$
$$ \bar{\cZ}_{k} (t,x) = \beta^k 
\int_{\Delta_k(t)} \int_{\RR^k} p(t-s_k, x - x_k) \prod_{i=1}^k  p(s_i - s_{i-1}, x_i - x_{i-1})\xi(s_i, x_i) dx_i ds_i,$$
from which we have
\begin{align*}
    \EE [(\bar{\cZ}_k (t,x) )^2]   &  = \beta^{2k} \int_{\Delta_k(t)} \int_{\RR^k} p(t-s_k, x - x_k)^2 \prod_{i=1}^k  p(s_i - s_{i-1}, x_i - x_{i-1})^2 dx_i ds_i \\
     & \leq  \beta^{2k} M_\alpha^{k+1} \int_{\Delta_k(t)}  (t-s_k)^{-1/\alpha} \prod_{i=1}^k  (s_i - s_{i-1})^{-1/\alpha} ds_i     \\
     & = \beta^{2k} M_\alpha^{k+1} t^{k(1-1/\alpha) - 1/\alpha} \frac{(\Gamma(1-1/\alpha))^{k+1}}{\Gamma((k+1)(1-1/\alpha))}, 
\end{align*}
where $M_\alpha := \text{max}_{x \in \RR} ( p (1,x))$. The last equality follows from the Beta integral formula
 \begin{equation*}
        \int_{t_i \geq 0, \, t_1 + \cdots + t_n \leq 1} \lpa 1 - \sum_{i=1}^n t_i \rpa ^{\beta -1} \prod_{j = 1}^n t_j^{\alpha_j - 1}dt_j = \frac{\Gamma(\beta)\prod_{j = 1}^n \Gamma(\alpha_j)}{\Gamma( \beta + \sum_{i=1}^n \alpha_i) }. 
    \end{equation*}
Therefore, 
$$  \EE [(\cZ_{\alpha, \beta}(t,x) )^2] < \infty, $$
the random variable in the right-hand side of formula (\ref{ EQsolutionSHElong}) is well-defined, and it is a mild solution to equation (\ref{FSHE}). 
\endproof

\begin{proposition} 
\label{Zalphabeta}
For every $\alpha \in (1, 2]$, the random variable 
\begin{equation}
    \label{formulaZab}
    \cZ_{\alpha, \beta} := \sum_{k = 0}^\infty I_k (\beta^k p^k) = 1 +  \sum_{k = 1}^{\infty} \beta^k 
\int_{\Delta_k(1)} \int_{\RR^k} \prod_{i=1}^k  p(s_i - s_{i-1}, x_i - x_{i-1})\xi(s_i, x_i) dx_i ds_i
\end{equation}
is well defined and square-integrable, where $p^0 = 1$ and $I_k := I_{k,1}.$
\end{proposition}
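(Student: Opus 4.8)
The plan is to exploit the orthogonality of multiple Wiener--Itô integrals across distinct chaos orders, together with the isometry property, so as to reduce the statement to the summability of a single numerical series. First I would recall that for $g \in L^2(\Delta_k(1) \times \RR^k)$ one has the isometry $\EE[I_k(g)^2] = \|g\|_{L^2(\Delta_k(1)\times \RR^k)}^2$, and that integrals of different orders are orthogonal, $\EE[I_j(g)I_k(h)] = 0$ for $j \neq k$. Writing $S_N = \sum_{k=0}^N I_k(\beta^k p^k)$ for the partial sums, orthogonality then gives
$$ \EE[S_N^2] = \sum_{k=0}^N \beta^{2k}\, \|p^k\|_{L^2(\Delta_k(1)\times \RR^k)}^2, $$
so that $(S_N)_N$ is Cauchy in $L^2(\Omega,\cG,\Q)$ --- hence converges to a square-integrable limit which is exactly $\cZ_{\alpha,\beta}$ --- precisely when the series $\sum_k \beta^{2k}\|p^k\|^2$ is finite.

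Next I would estimate $\|p^k\|^2$ by essentially the computation already carried out in the proof of Proposition \ref{SolutionSHE}, now with the endpoint factor $p(t-s_k, x-x_k)$ removed. Expanding the square and bounding one copy of each density by its maximum, $p(s_i-s_{i-1},\cdot)^2 \le M_\alpha (s_i-s_{i-1})^{-1/\alpha}\, p(s_i-s_{i-1},\cdot)$, the surviving densities integrate to one in the spatial variables, leaving
$$ \|p^k\|^2 \le M_\alpha^k \int_{\Delta_k(1)} \prod_{i=1}^k (s_i-s_{i-1})^{-1/\alpha}\, ds_i = M_\alpha^k\, \frac{(\Gamma(1-1/\alpha))^k}{\Gamma(1 + k(1-1/\alpha))}, $$
where the last equality is the Beta integral formula after the change of variables to the gaps $s_i - s_{i-1}$. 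Alternatively, since every factor depends only on the increment $x_i - x_{i-1}$, the same triangular change of variables makes the spatial integral factorize exactly, replacing $M_\alpha^k$ by $\lpa \int_\RR p(1,u)^2\,du \rpa^k$; either route yields a bound of the same shape.

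Finally I would sum the resulting series. Setting $c = \beta^2 M_\alpha \Gamma(1-1/\alpha)$, the estimate reads $\EE[S_N^2] \le \sum_{k=0}^N c^k / \Gamma(1 + k(1-1/\alpha))$, which is the partial sum of a Mittag--Leffler function $E_{1-1/\alpha}(c)$. Because $\alpha \in (1,2]$ forces the exponent $1 - 1/\alpha$ to be strictly positive, the denominator $\Gamma(1 + k(1-1/\alpha))$ grows super-exponentially in $k$, so the series converges for every value of $c$; this gives $\EE[\cZ_{\alpha,\beta}^2] < \infty$ and completes the proof. The only genuine subtlety, and the place where the hypothesis $\alpha > 1$ enters essentially, is exactly this convergence step: the positivity of $1 - 1/\alpha$ is what tames the factorial-type growth of the chaos coefficients, ensuring that the formal sum defining $\cZ_{\alpha,\beta}$ assembles into a bona fide $L^2$ random variable.
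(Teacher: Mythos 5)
Your proposal is correct and follows essentially the same route as the paper's own proof: orthogonality/isometry of the multiple Wiener integrals reduces everything to the summability of $\sum_k \beta^{2k}\|p^k\|^2$, which is then bounded via the scaling estimate $p(s,x)^2 \le M_\alpha s^{-1/\alpha} p(s,x)$, spatial integration, and the Beta integral formula, with convergence guaranteed because $1-1/\alpha>0$ makes the Gamma denominators grow super-exponentially. Your additional framing of the partial sums as an $L^2$-Cauchy sequence and the identification of the bound as a Mittag--Leffler series are harmless refinements of the same argument.
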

We remark that the random variable above is related to $\cZ_{\alpha, \beta}(t,x) $ defined in (\ref{EQsolutionSHE}) via
$$ \cZ_{\alpha, \beta} = \int_\RR \cZ_{\alpha, \beta}(1,x)dx.  $$

\proof 
We recall the covariance relation of multiple Wiener integral. For $g \in L^2(\Delta_k(1)\times \RR^k)$ and $ h \in L^2(\Delta_j(1)\times \RR^j)$,
$$ \EE \lcr  I_k(g)I_j(h) \rcr = {\bf 1}_{\lacc k=j \racc } \int_{\Delta_k(1)\times \RR^k} gh d\s d\x. $$
It implies that
$$\EE[(\cZ_{\alpha, \beta} )^2] =  \sum_{k = 0}^\infty \EE\lcr \lpa I_k (\beta^k p^k) \rpa^2  \rcr = \sum_{k = 0}^\infty || \beta^k p^k ||_{\Delta_k(1)\times \RR^k}^2. $$
To prove that $\cZ_{\alpha, \beta}$ is square-integrable, it suffices to verify that $\sum_{k = 0}^\infty || \beta^k p^k ||_{\Delta_k(1)\times \RR^k}^2 < \infty$. We bound this infinite sum as follows:
\begin{align*}
      & \int_{\Delta_k(1)}\int_{\RR^k} p^k(\s, \x)^2 d\s d\x \\
    = & \int_{\Delta_k(1)}\int_{\RR^k} \lpa \prod_{j=1}^{k} p (s_{j}-s_{j-1}, x_{j} - x_{j-1}) \rpa^2 d\s d\x  \\
    = & \int_{\Delta_k(1)}\int_{\RR^k} \lpa \prod_{j=1}^{k} (s_{j}-s_{j-1})^{-\frac{1}{\alpha}} p (1, 
 (s_{j}-s_{j-1})^{-\frac{1}{\alpha}}(x_{j} - x_{j-1})) \rpa^2 d\s d\x  \\
    \leq &  M_\alpha^k \int_{\Delta_k(1)}\int_{\RR^k} \prod_{j=1}^{k} (s_{j}-s_{j-1})^{-\frac{2}{\alpha}} p (1, 
 (s_{j}-s_{j-1})^{-\frac{1}{\alpha}}(x_{j} - x_{j-1}))  d\s d\x  \\
    = &  M_\alpha^k \int_{\Delta_k(1)} \prod_{j=1}^{k} (s_{j}-s_{j-1})^{-\frac{1}{\alpha}}d\s  \\
     = &  \big( M_\alpha \Gamma(1-1/\alpha)  \big) ^k \Big( \Gamma(k - \frac{k}{\alpha} + 1)\Big) ^{-1}, 
\end{align*}
where $M_\alpha := \text{max}_{x \in \RR} ( p (1,x))$ and the last equality follows from the Beta integral formula as in the proof of Proposition \ref{SolutionSHE}.
Hence,
\[ \EE [(\cZ_{\alpha, \beta} )^2] = \sum_{k=0}^\infty \beta^k \int_{\Delta_k(1)}\int_{\RR^k} p^k(\s, \x)^2 d\s d\x \leq \sum_{k=0}^\infty    \big(\beta M_\alpha \Gamma(1-\frac{1}{\alpha})  \big)^k   \Big(\Gamma(k - \frac{k}{\alpha} + 1)  \Big)^{-1} < \infty. \]
\endproof

\subsection{The Wiener--Itô chaos expansion}
For the basic theory of the Wiener--Itô integrals with respect to Poisson process, the readers are recommended to read \cite{LastPenrose2017lectures}. Here, we give a rough introduction of the chaos expansion. We start with the definitions of factorial measures and the Wiener--Itô integrals, by means of which we can formulate the Wiener--Itô chaos expansion.

\begin{definition}[Section 4.2 in \cite{LastPenrose2017lectures}]
Let $k \in \NN_0 \cup \lacc \infty \racc$ and $\omega = \sum_{j=1}^k \delta_{x_j}$ be a point process on an arbitrary measurable space $(S, \cS)$. We define 
 \begin{equation*}
     \omega^{(m)}(C) = \sum_{i_1,\cdots,i_m \leq k}^{\neq} {\bf 1}_{\lacc (x_{i_1}, \cdots, x_{i_m}) \in C \racc }, \quad C \in \cS^m,
 \end{equation*}
 where the superscript $\neq$ indicates summation over $m$-tuples with pairwise different entries. The measure
 $\omega^{(m)}$ is called the $m$-th \textbf{factorial measure} of $\omega$.
\end{definition}

\begin{definition}[Definition 12.10 in \cite{LastPenrose2017lectures}] Let $\omega$ be a Poisson point process with intensity measure $v$. For $m \in \NN$ and symmetric $f \in L^2(v^m)$, the random variable
\begin{equation}
    \bar{\omega}^{(m)} (f) = \sum_{k=0}^m (-1)^{m-k} \binom{m}{k} \omega^{(k)} \otimes v^{m-k} (f)
\end{equation}
is called the (stochastic, m-th order) \textbf{Wiener--Itô integral} of f. When $m=0$, define $\bar{\omega}^{(0)} (f)$ to be the identity, i.e. $\bar{\omega}^{(0)} (f) := f.$
\end{definition}

The most important fact of the higher-order 
Wiener--Itô integrals that we will use later is the covariance relation.

\begin{proposition}[Corollary 12.8 in \cite{LastPenrose2017lectures}]
\label{Covariancerelation}
Let $m,n \in \NN$, $f \in L^2(v^m)$ and $g \in L^2(v^n )$. Assume that both $f$ and $g$ are symmetric, bounded and satisfy 
 $v^m(\lacc f \neq 0 \racc) < \infty$ and $v^n(\lacc g \neq 0 \racc) < \infty$. Then 
$$ \EE [ \bar{\omega}^{(m)} (f) \bar{\omega}^{(n)} (g)] = {\bf 1}_{\lacc m=n \racc} m! \int fg d v^m . $$
\end{proposition}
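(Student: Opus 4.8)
The plan is to compute the second moment directly from the definition of the Wiener--Itô integral, reducing everything to expectations of integrals against factorial measures, which are in turn handled by the multivariate Mecke equation (Theorem 4.4 in \cite{LastPenrose2017lectures}). First I would expand, by bilinearity,
$$\EE\lcr \bar{\omega}^{(m)}(f)\,\bar{\omega}^{(n)}(g) \rcr = \sum_{k=0}^m \sum_{l=0}^n (-1)^{m-k+n-l}\binom{m}{k}\binom{n}{l}\, \EE\lcr \big(\omega^{(k)}\otimes v^{m-k}\big)(f)\,\big(\omega^{(l)}\otimes v^{n-l}\big)(g) \rcr,$$
so that the whole problem is reduced to evaluating the mixed moments $\EE\lcr(\omega^{(k)}\otimes v^{m-k})(f)\,(\omega^{(l)}\otimes v^{n-l})(g)\rcr$. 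The boundedness of $f,g$ together with $v^m(\lacc f\neq 0\racc)<\infty$ and $v^n(\lacc g\neq 0\racc)<\infty$ guarantees that every integral appearing below is absolutely convergent, so Fubini and the Mecke equation apply freely.

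The key step is a product formula for factorial measures. Writing the two integrals against $\omega^{(k)}$ and $\omega^{(l)}$ as sums over $k$- and $l$-tuples of pairwise distinct atoms of $\omega$, I would split the combined double sum according to how many of the $k$ atoms feeding $f$ coincide with atoms feeding $g$. If exactly $r$ of them coincide, and the matching is fixed among the corresponding ways, the product of the two factorial sums collapses to a single integral against $\omega^{(k+l-r)}$ of a partially diagonalized version of $f\otimes g$. Taking expectations and invoking the Mecke equation in the form $\EE\lcr\omega^{(p)}(h)\rcr=\int h\,dv^p$ for deterministic $h$, I obtain a finite sum, indexed by the number $r$ of coincidences, of integrals of $f\otimes g$ over partial diagonals against the product measure $v^{m+n-r}$.

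Substituting these expressions back into the alternating double sum and regrouping, the binomial coefficients paired with alternating signs trigger massive cancellation through the elementary identity $\sum_j (-1)^j\binom{N}{j}=0$ for $N\geq 1$: every term in which the two families of variables are not completely matched is annihilated. What survives is only the fully diagonal contribution, which forces $k=l=m=n$ and, after accounting for the $m!$ ways of matching the $m$ atoms of the first integral with the $m$ atoms of the second (here the symmetry of $f$ and $g$ is used), yields precisely ${\bf 1}_{\lacc m=n\racc}\, m!\int fg\, dv^m$.

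I expect the main obstacle to be the careful bookkeeping in the product formula and in the ensuing cancellation: one must track exactly which partial diagonals occur, with what multiplicities, and then verify that the alternating binomial sums eliminate all but the top-order term. The analytic hypotheses (symmetry, boundedness, finiteness in the $v$-measure sense) are precisely what is needed to justify applying Mecke and rearranging the finitely many absolutely convergent integrals; the combinatorics, rather than any analysis, is where the real content lies. Since this identity is quoted verbatim as Corollary 12.8 in \cite{LastPenrose2017lectures}, in the paper we simply invoke it, the reconstruction above being the route one would take to establish it from scratch.
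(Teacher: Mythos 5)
The paper offers no proof of this proposition at all: it is imported verbatim as Corollary 12.8 of \cite{LastPenrose2017lectures}, which is exactly what you conclude in your final paragraph, so your treatment coincides with the paper's. Your reconstruction is moreover the standard argument behind that corollary --- bilinear expansion of the two Wiener--It\^o integrals, the pathwise product formula for factorial measures organized by the number $r$ of coincident atoms, the multivariate Mecke equation in the form $\EE\lcr \omega^{(p)}(h)\rcr = \int h\, dv^{p}$, and the alternating-binomial cancellation that kills every partially matched term and leaves only the fully matched one (possible only when $m=n$, with multiplicity $m!$ by symmetry of $f$ and $g$) --- and the hypotheses of symmetry, boundedness and finite-measure support are invoked exactly where they are needed to justify Fubini and Mecke.
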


\begin{proposition}[Theorem 18.10 in \cite{LastPenrose2017lectures}] 
\label{chaosexpansion}
For a Poisson point process $\omega$, if $\EE[f(\omega)^2] < \infty$, then $f(\omega)$ has the following \textbf{chaos expansion}
\begin{equation}
    f(\omega) = \sum_{k=0}^{\infty} \frac{1}{k!}\bar{\omega}^{(k)} (T_k f),
\end{equation}
where 
$$ T_k f(x_1, \cdots, x_k) := \EE \lcr D_{x_1, \cdots, x_k}^k f(\omega) \rcr$$
is symmetric and square-integrable, and
 $D_{x_1, \cdots, x_k}^k$ is the difference operator, defined by the following iteration:
 \begin{align*}
     D^0 f &:= f, \\
     D^1_x f (\omega) = D_x f(\omega) &:= f(\omega + \delta_x) - f(\omega), \\
      D_{x_1, \cdots, x_k}^k f(\omega) &:= D_{x_1}^1 D_{x_2, \cdots, x_k}^{k-1} f(\omega).
 \end{align*}
\end{proposition}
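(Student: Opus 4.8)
The plan is to prove the expansion through the \emph{Fock-space isometry} combined with the orthogonality of the Wiener--Itô integrals established in Proposition~\ref{Covariancerelation}. With $T_k f$ as in the statement, set
$$ Sf := \sum_{k=0}^{\infty} \frac{1}{k!}\, \bar{\omega}^{(k)}(T_k f); $$
the task is to show that this series converges in $L^2$ and that $Sf = f(\omega)$ almost surely. Both points will be consequences of a single analytic input, the isometry, so the argument splits into proving that input and then deducing the expansion from it formally.

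The core step, and the main obstacle, is the Fock-space isometry: for all $f,g$ with $\EE[f(\omega)^2], \EE[g(\omega)^2] < \infty$,
\begin{equation*}
\EE[f(\omega)g(\omega)] = \sum_{k=0}^{\infty} \frac{1}{k!} \int T_k f(\x)\, T_k g(\x)\, dv^k(\x).
\end{equation*}
I would first verify this on a tractable class whose linear span is dense in $L^2$, such as the exponential (Wick) functionals $\omega \mapsto \exp(-\omega(u))$ with $u$ nonnegative, bounded and of bounded support, or equivalently functionals depending on $\omega$ only through finitely many counts $\omega(B_1), \dots, \omega(B_n)$ on disjoint bounded sets. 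On such functionals every difference operator $D^k f$ is explicitly computable, and the required identities for $\EE[D^k f \cdot D^k g]$ reduce to the multivariate Mecke equation relating integrals against the factorial measures $\omega^{(m)}$ to integrals against $v^m$. Once the isometry holds on this class, I would extend it to all square-integrable $f,g$ by density, using that both sides are continuous for the $L^2$ norm. Essentially all of the difficulty is concentrated here: one must exhibit a class that is at once dense, stable enough for the $D^k$-computations to close up, and matched to the Mecke formula.

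Granting the isometry, the remainder is formal. Taking $g=f$ gives $\sum_k \frac{1}{k!}\|T_k f\|_{L^2(v^k)}^2 = \EE[f(\omega)^2] < \infty$, which shows in particular that each $T_k f$ is square-integrable; combined with Proposition~\ref{Covariancerelation}, which yields both the pairwise orthogonality of the terms and the identity $\EE[\bar{\omega}^{(k)}(T_k f)^2] = k!\,\|T_k f\|_{L^2(v^k)}^2$, it follows that the series defining $Sf$ converges in $L^2$. To identify $Sf$ with $f$, I would first compute the kernels of a single integral: iterating the elementary relation $D_x \bar{\omega}^{(m)}(h) = m\,\bar{\omega}^{(m-1)}(h(x,\cdot))$ and taking expectations (all integrals of positive order being centered) gives $T_k \bar{\omega}^{(m)}(h) = \Un_{\{k=m\}}\, m!\, h$. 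The isometry also shows that $F \mapsto T_k F$ is $L^2$-continuous, since $\|T_k F\|_{L^2(v^k)}^2 \leq k!\,\EE[F^2]$, so it may be applied term by term to the convergent series, yielding $T_k(Sf) = T_k f$ for every $k$. Thus $f - Sf$ has all chaos kernels equal to zero, and a final application of the isometry to $f - Sf$ forces $\EE[(f - Sf)^2] = 0$. The symmetry of $T_k f$, needed for the integral $\bar{\omega}^{(k)}(T_k f)$ to be defined, is checked directly from the recursion defining $D^k_{x_1,\dots,x_k}$.
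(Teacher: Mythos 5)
The paper does not actually prove this proposition: it is imported verbatim as Theorem~18.10 of the cited monograph of Last and Penrose, so there is no internal proof to compare against. Your outline is, in substance, the proof given in that reference: the Fock-space isometry (Theorem~18.6 there) is first verified on a class of exponential/cylindrical functionals via the multivariate Mecke equation, then extended to all of $L^2$ by density, and the chaos expansion is deduced formally from the isometry together with the orthogonality relations of Proposition~\ref{Covariancerelation} --- exactly your plan. Two points where your sketch is thinner than the real argument, and where the actual content lies: (i) in the density step, the right-hand side of the isometry is not obviously $L^2$-continuous in $f$, because the difference operator does not commute with $L^2$-limits; one must prove that the $L^2(v^k)$-limits of $T_k f^n$ along an approximating sequence coincide $v^k$-a.e.\ with $T_k f$ for the limit function $f$, which is a genuine lemma rather than a routine continuity remark; (ii) the pathwise relation $D_x \bar{\omega}^{(m)}(h) = m\,\bar{\omega}^{(m-1)}(h(x,\cdot))$ is only immediate for kernels $h$ that are bounded with support of finite intensity measure, so your computation of $T_k \bar{\omega}^{(m)}(h) = {\bf 1}_{\lacc k=m \racc}\, m!\, h$ must first be carried out on that restricted class and then extended --- here your own bound $\|T_k F\|_{L^2(v^k)}^2 \leq k!\,\EE[F^2]$, which is a correct consequence of the isometry, does make the extension routine. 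With these two repairs your plan is complete and coincides with the standard proof; it neither simplifies nor generalizes what is in the cited source.
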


Proposition \ref{chaosexpansion} states that
every square-integrable function
admits a Wiener–Itô chaos expansion, that is, it can be written as an infinite sum of orthogonal
multiple Wiener–Itô integrals. The normalized partition function is square-integrable (see \cite{MiuraTawaraTsuchida2008}), so it admits a Wiener–Itô chaos expansion. We mimic the proof of \cite[Proposition 3.2]{COSCO2019805} and obtain an explicit expression. 

\begin{proposition}
\label{WienerIto}
The normalized partition function has the Wiener--Itô chaos expansion
$$ W_t = \sum_{k=0}^{\infty} \frac{1}{k!}\bar{\omega}^{(k)}_t (T_k W_t), $$
where, for all $\s \in [0,t]^k, \x \in \RR^k$ and $k \geq 0$,
$$T_k W_t (\s, \x) = \lambda(\beta)^k \EE_{\P}\lcr \prod_{i = 1}^{k} \chi_{s_i,x_i}^{r}(X)\rcr$$
and $\chi_{s,x}^{r}(X) = {\bf 1}_{\lacc |x - X_s| \leq r/2 \racc } $, with the convention that an empty product equals 1.
\end{proposition}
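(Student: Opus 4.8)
The plan is to feed the normalized partition function straight into the abstract Poisson chaos expansion of Proposition~\ref{chaosexpansion}. Since $W_t$ is square-integrable (as recalled above from \cite{MiuraTawaraTsuchida2008}) and depends on the environment only through the restriction $\omega_t$, that proposition applies and yields the expansion with kernels $T_k W_t(\s,\x)=\EE_\Q\lcr D^k_{(s_1,x_1),\ldots,(s_k,x_k)}W_t(\omega)\rcr$; the entire task is then to evaluate these kernels. Because the $\P$-average defining $Z_t$ is over the path $X$ and is frozen from the point of view of the difference operator (which modifies only $\omega$), everything reduces to computing how $D^k$ acts on the quenched functional $\omega\mapsto\exp(\beta\omega(V_t(X)))$ for a fixed path $X$.

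First I would record the effect of inserting a single atom. Adding $\delta_{(s,x)}$ raises the count $\omega(V_t(X))$ by exactly $\mathbf{1}_{\{(s,x)\in V_t(X)\}}$, and for $s\le t$ one has $(s,x)\in V_t(X)$ precisely when $x$ lies in the interval $U(X_s)$ of length $r$ centred at $X_s$, i.e. when $|x-X_s|\le r/2$, which is $\chi_{s,x}^{r}(X)=1$. As $\lambda(\beta)=e^\beta-1$ and the added count is $0$ or $1$, this gives the multiplicative rule $\exp\lpa\beta(\omega+\delta_{(s,x)})(V_t)\rpa=\exp(\beta\omega(V_t))\lpa 1+\lambda(\beta)\chi_{s,x}^{r}(X)\rpa$. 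Writing the iterated difference operator in its inclusion--exclusion form $D^k_{(s_1,x_1),\ldots,(s_k,x_k)}f(\omega)=\sum_{S\subseteq\{1,\ldots,k\}}(-1)^{k-|S|}f\lpa\omega+\sum_{i\in S}\delta_{(s_i,x_i)}\rpa$ and applying the elementary identity $\sum_{S}(-1)^{k-|S|}\prod_{i\in S}a_i=\prod_{i=1}^k(a_i-1)$ with $a_i=1+\lambda(\beta)\chi_{s_i,x_i}^{r}(X)$, the alternating sum telescopes to $D^k_{\ldots}\exp(\beta\omega(V_t))=\exp(\beta\omega(V_t))\,\lambda(\beta)^k\prod_{i=1}^k\chi_{s_i,x_i}^{r}(X)$.

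Next I would carry this through the two averages. Pulling $D^k$ inside $\EE_\P$ --- legitimate since $D^k$ is a finite signed combination of environment shifts, each producing a finite shifted partition function --- gives $D^k_{\ldots}W_t(\omega)=e^{-\lambda(\beta)vr^dt}\lambda(\beta)^k\EE_\P\lcr\exp(\beta\omega(V_t))\prod_{i=1}^k\chi_{s_i,x_i}^{r}(X)\rcr$. Taking $\EE_\Q$ and exchanging it with $\EE_\P$ by Fubini (the path $X$ and the environment $\omega$ living on independent spaces) leaves the factor $\EE_\Q[\exp(\beta\omega(V_t(X)))]$ inside the path average; by the computation recalled in the introduction this equals $\exp(\lambda(\beta)vr^dt)$ for \emph{every} fixed path $X$. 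This path-independence is the decisive point: the factor is constant in $X$, so it comes out of the $\P$-average and cancels the normalizing constant $e^{-\lambda(\beta)vr^dt}$ exactly, leaving the manifestly symmetric kernel $T_k W_t(\s,\x)=\lambda(\beta)^k\EE_\P\lcr\prod_{i=1}^k\chi_{s_i,x_i}^{r}(X)\rcr$, which is the claimed formula (the case $k=0$ reducing to $T_0 W_t=\EE_\Q[W_t]=1$, consistent with the empty-product convention).

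The computation is elementary once organized this way, so I expect no serious obstacle; the points that need genuine care are all of the bookkeeping variety --- verifying the inclusion--exclusion representation of the iterated difference operator, and checking the integrability hypotheses that license both the application of Proposition~\ref{chaosexpansion} and the Fubini exchange (finiteness of the shifted partition functions together with the square-integrability of $W_t$). The one structural fact that truly drives the collapse of the kernel into a clean product is the path-independence of $\EE_\Q[\exp(\beta\omega(V_t))]$, which is special to this Poisson tube geometry, where the tube volume $r^d t$ never depends on the path.
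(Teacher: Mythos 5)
Your proposal is correct and follows essentially the same route as the paper: compute the action of the difference operator on $e^{\beta\omega(V_t)}$ via the multiplicative effect of inserting an atom into the tube, then exchange $\EE_\Q$ and $\EE_\P$ and use the path-independence of $\EE_\Q[e^{\beta\omega(V_t)}]=e^{\lambda(\beta)vr^dt}$ to cancel the normalization. The only cosmetic difference is that you obtain the $k$-fold formula from the inclusion--exclusion representation of $D^k$ and the identity $\sum_{S}(-1)^{k-|S|}\prod_{i\in S}a_i=\prod_{i=1}^k(a_i-1)$, whereas the paper simply iterates the one-atom computation; these are the same calculation.
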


\begin{proof} 
For a fixed path $X$,
\begin{equation*}
    \omega(V_t(X)) = \int_{[0,t]\times \RR}  \chi_{s,y}^{r}(X)\omega(dsdy).
\end{equation*}
For all $(s,x) \in [0,t]\times \RR,$ we have (for more details, see \cite[section 18.1]{LastPenrose2017lectures})
\begin{align*}
    D_{s,x} e^{\beta \omega(V_t)} &= e^{\beta \int_{[0,t]\times \RR}  \chi_{u,y}^{r}(X)(\omega+ \delta_{s,x})(dudy) } -  e^{\beta \omega(V_t)} \\
    & = e^{\beta \omega(V_t)} ( e^{\beta \chi_{s,x}^{r}(X) } -1 ) \\
    &= e^{\beta \omega(V_t)} \lambda(\beta)\chi_{s,x}^{r}(X).
\end{align*}
Then we deduce that
\begin{align*}
       T_k W_t (\s, \x)  = \EE_\Q \EE_\P \lcr e^{\beta \omega(V_t)}\prod_{i = 1}^{k} \lambda(\beta)\chi_{s_i,x_i}^{r}(X)\rcr e^{-tv\lambda r^d}  
           = \lambda(\beta)^k \EE_{\P}\lcr \prod_{i = 1}^{k} \chi_{s_i,x_i}^{r}(X)\rcr. 
\end{align*}
\end{proof}

\section{Proofs of main theorems}
Inspired by \cite[Section 6.2]{COSCO2019805}, we introduce the generalized time-depending functions
\begin{equation}
\label{Defphi}
  \phi_t^k(\s,\x) = \gamma_t^{-k}\lambda(\beta_t)^k \EE_{\P}\lcr \prod_{i = 1}^{k} \chi_{s_i,x_i}^{r_t/t^{1/\alpha}}(X)\rcr {\bf 1}_{\Delta_k(1)}(\s),  
\end{equation}
which extends Cosco's definition for $\alpha = 2$ to arbitrary $\alpha \in (1,2]$. Here,
\begin{equation}
    \label{gamma_t}
    \gamma_t : = (\beta^*)^{-(\alpha+1)/(\alpha-1)} v_t^{1/(\alpha-1)} r_t^{(\alpha+1)/(\alpha-1)} \lambda(\beta_t)^{(\alpha+1)/(\alpha-1)}.
\end{equation}
When $\alpha = 2$, it coincides with \cite[formula (64)]{COSCO2019805}. The diffusive scaling property of stable processes implies that
\begin{equation}
    \label{formula_scaling}
    \chi_{s/t,x/t^{1/\alpha}}^{r_t/{t^{1/\alpha}}} (X)\elaw  \chi_{s,x}^{r_t} (X). 
\end{equation}
Combining Proposition \ref{WienerIto} with \eqref{Defphi}, we find that $$W_t = \sum_{k=0}^{\infty} \gamma_t^k \bar{\omega}_t^{(k)} (\tilde{\phi}_t^k),$$
where  $$\tilde{\phi}_t^k :=  \phi_t^k (\cdot / t, \cdot / t^{1/\alpha}). $$
For any function $g$ defined on $\Delta_k(1) \times \RR^k$, let $ \tilde{g}_t$ denote the rescaled function of $g$, which is defined on $\Delta_k(t) \times \RR^k$ as
$$\tilde{g}_t (\s,\x) :=  g (\s / t, \x / t^{1/\alpha}). $$

\begin{proposition}
    \label{k-thItemConvergence}
    Let $g \in L^2(\Delta_k(1) \times \RR^k)$
    for $k \geq 1$.  As $t \rightarrow \infty,$
    \begin{equation}
        \label{formula-k-thConve}
        \gamma_t^k \bar{\omega}_t^{(k)} (\tilde{g}_t) \claw  I_k(g).
    \end{equation}
\end{proposition}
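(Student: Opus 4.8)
The plan is to prove the convergence in law by the method of moments: I would show that every moment of $\gamma_t^k \bar{\omega}_t^{(k)}(\tilde g_t)$ converges to the corresponding moment of $I_k(g)$, and then conclude using that an element of a fixed Wiener chaos is determined by its moments. The starting point is the second moment. Since $\tilde g_t$ is supported on $\Delta_k(t)\times\RR^k$, its symmetrization satisfies $\bar\omega_t^{(k)}(\tilde g_t)=\bar\omega_t^{(k)}(\mathrm{Sym}\,\tilde g_t)$ and $\int(\mathrm{Sym}\,\tilde g_t)^2\,dv_t^k=\tfrac1{k!}\int_{\Delta_k(t)\times\RR^k}\tilde g_t^2\,dv_t^k$, so the covariance relation of Proposition \ref{Covariancerelation} gives $\EE[(\bar\omega_t^{(k)}(\tilde g_t))^2]=v_t^k\int_{\Delta_k(t)\times\RR^k}\tilde g_t^2\,d\s\,d\x$. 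The diffusive change of variables $s_i=t u_i,\ x_i=t^{1/\alpha}y_i$ turns this into $v_t^k\,t^{k(1+1/\alpha)}\|g\|^2$, whence $\EE[(\gamma_t^k\bar\omega_t^{(k)}(\tilde g_t))^2]=(\gamma_t^2 v_t t^{1+1/\alpha})^k\|g\|^2$. A direct computation using assumption (a), namely $\gamma_t^2 v_t\sim t^{-(1+1/\alpha)}$, shows $\gamma_t^2 v_t t^{1+1/\alpha}\to1$, so the variance converges to $\|g\|^2=\EE[I_k(g)^2]$.

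Next I would treat the $p$-th moment through the moment (diagram) formula for products of Wiener--Itô integrals against a Poisson process. Expanding $\prod_{i=1}^p\bar\omega_t^{(k)}(\tilde g_t)$ and taking expectation yields a finite sum over partitions of the $pk$ variables (arranged in $p$ groups of size $k$) into blocks; because the factorial measure forces the $k$ variables inside a single integral to be pairwise distinct, only non-flat partitions occur (each block meets every group at most once), and because $\bar\omega_t^{(m)}$ is centred for $m\ge1$ the singletons drop out, so every block has size between $2$ and $p$. A block of size $m$ merges $m$ variables into one point integrated once against the intensity $v_t\,ds\,dx$; after the same rescaling each block contributes one factor $v_t t^{1+1/\alpha}$ times an $O(1)$ integral of a product of marginals of $g$. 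Hence a diagram with $b$ blocks scales like $\gamma_t^{pk}(v_t t^{1+1/\alpha})^{b}=(\gamma_t^2 v_t t^{1+1/\alpha})^{b}\,\gamma_t^{\,pk-2b}$, and $pk-2b=\sum_{\text{blocks}}(\mathrm{size}-2)\ge0$ vanishes exactly for the pairings.

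The decisive point is that the non-pairing diagrams disappear. From the definition of $\gamma_t$ one computes $\gamma_t^{\alpha-1}=(\beta^*)^{-(\alpha+1)}v_t r_t^{\alpha+1}\lambda(\beta_t)^{\alpha+1}$, which tends to $0$ by assumption (b); since $\alpha>1$ this forces $\gamma_t\to0$. Every diagram containing a block of size $\ge3$ carries a positive power $\gamma_t^{\,pk-2b}$ and therefore vanishes, while the pairing diagrams, for which $(\gamma_t^2 v_t t^{1+1/\alpha})^{b}\to1$, survive and—by the same change of variables applied pairwise—reproduce exactly the Gaussian diagram formula for $\EE[I_k(g)^p]$ given by the multiple-Wiener-integral isometry (the covariance relation recalled in the proof of Proposition \ref{Zalphabeta}). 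Thus all moments converge. To make the $O(1)$ block integrals genuinely finite (products of three or more marginals of $g$ need not be integrable for $g\in L^2$ alone) I would first prove the statement for bounded, compactly supported $g$ and then pass to general $g\in L^2$ by density, using the variance identity above as a uniform-in-$t$ $L^2$ bound to control the approximation in both the Poisson and the Wiener integrals.

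I expect the main obstacle to be twofold. On the computational side, the bookkeeping of the diagram formula—tracking the ordered-simplex support of $\tilde g_t$, the symmetrization factors, and the exact per-block scaling—is delicate and is precisely where assumptions (a) and (b) must be matched. On the conceptual side, concluding weak convergence from moment convergence requires the moment-determinacy of $I_k(g)$: for $k\le2$ this is immediate from Carleman's criterion via hypercontractivity, whereas for $k\ge3$ the moments grow too fast for Carleman and one must invoke the known determinacy of elements of a fixed Wiener chaos (or, equivalently, recast the argument as a contraction/transfer principle yielding convergence in law directly, without passing through moments).
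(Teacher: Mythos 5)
Your scaling analysis is sound and matches the paper's: the two facts you isolate, namely $\gamma_t^2 v_t t^{1+1/\alpha}\to 1$ (from assumption (a)) and $\gamma_t\to 0$ (from assumption (b), via $\gamma_t^{\alpha-1}=(\beta^*)^{-(\alpha+1)}v_t r_t^{\alpha+1}\lambda(\beta_t)^{\alpha+1}$), are exactly the ones the paper exploits, and your diagram bookkeeping (pairings survive, blocks of size $\geq 3$ are killed by positive powers of $\gamma_t$) is the correct cumulant-level picture. The $L^2$-density step at the end is also the paper's (Lemma \ref{diagram}). However, there is a genuine gap at the final step, and it is fatal for $k\geq 3$: the method of moments yields convergence in law only if the limit law is determined by its moments, and an element $I_k(g)$ of a fixed Wiener chaos of order $k\geq 3$ is in general \emph{not} moment-determinate. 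Its density decays like $\exp(-c|x|^{2/k})$, so Krein's condition $\int_{\RR}\frac{-\log f(x)}{1+x^2}\,dx<\infty$ holds for $k\geq 3$, which implies indeterminacy; this is the same phenomenon as Berg's classical example that the cube of a standard normal random variable is indeterminate. So the ``known determinacy of elements of a fixed Wiener chaos'' that you invoke does not exist, and your fallback (``recast the argument as a contraction/transfer principle'') is not an argument but a placeholder for one. As you yourself note, Carleman/hypercontractivity rescues only $k\leq 2$.

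The paper avoids this obstruction entirely, and the way it does so is instructive. For $k=1$ it works with characteristic functions rather than moments — the Poisson exponential formula plus the elementary Taylor bound of Lemma \ref{IneqExp} give $\EE_\Q[e^{iu\gamma_t\bar\omega_t(\tilde g_t)}]\to\exp(-\|g\|_2^2u^2/2)$ — and characteristic functions always determine the law; your computation is essentially the logarithm of this, but carried out for all moments instead of the Fourier transform, which is precisely where determinacy sneaks in. For $k>1$ the paper never touches moments of order higher than two: it takes $g$ of product form $g^1(s_1,x_1)\cdots g^k(s_k,x_k)$ with pairwise disjoint supports, so that $\gamma_t^k\bar\omega_t^{(k)}(\tilde g_t)=\prod_{i}\gamma_t\bar\omega_t(\tilde g_t^i)$ is a product of \emph{independent} first-order Poisson integrals; the $k=1$ case then gives convergence in law of the product to $\prod_i I_1(g^i)=I_k(g)$, and linearity plus the same $L^2$-approximation you propose (Billingsley's theorem) finishes the proof for general $g\in L^2(\Delta_k(1)\times\RR^k)$. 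To repair your proof you would have to replace the moment computation by a characteristic-function computation for the $k$-fold integrals, which in effect forces you back onto the paper's route.
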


In order to prove Proposition \ref{k-thItemConvergence}, we need the Poisson formula for exponential moments and two lemmas below. 

\begin{itemize}
    \item For all non-negative and all non-positive measurable functions $h$ on $\RR_+ \times \RR^d,$ the \textit{Poisson formula for exponential moments} (see \cite[Chapter 3]{LastPenrose2017lectures})  writes
\begin{equation*}
     \EE_\Q  \lcr  e^{\int h(s,x)w_t(dsdx)}  \rcr  = \exp \int_{]0,t] \times \RR} (e^{h(s,x)} - 1 ) vdsdx.
\end{equation*}
    The formula remains true when $h$ is replaced by $ih$, for any real integrable function $h$.
\end{itemize}

\begin{lemma}
    \label{IneqExp}
    For any $z \in \CC$ with $\mathrm{Re}(z) \leq $ 0,
    $$ \vert e^z - \sum_{k=0}^n  \frac{1}{k!} z^k \vert \leq \frac{1}{(n+1)!}  \vert z \vert ^{n+1}.$$
\end{lemma}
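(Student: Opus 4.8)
The plan is to prove the estimate by induction on $n$, using nothing more than the derivative recursion satisfied by the successive Taylor remainders together with the single analytic fact that $|e^w|\le 1$ whenever $\mathrm{Re}(w)\le 0$. Write $R_n(z):=e^z-\sum_{k=0}^n \frac{z^k}{k!}$ for the remainder. Differentiating termwise gives $R_n'(z)=R_{n-1}(z)$, while clearly $R_n(0)=0$; integrating $R_n'$ along the straight segment $w=tz$, $t\in[0,1]$, therefore yields the recursion $R_n(z)=z\int_0^1 R_{n-1}(tz)\,dt$. This is the identity I would iterate.

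First I would dispose of the base case $n=0$. Here $R_0(z)=e^z-1=z\int_0^1 e^{tz}\,dt$, and since $|e^{tz}|=e^{t\,\mathrm{Re}(z)}\le 1$ for $t\in[0,1]$ when $\mathrm{Re}(z)\le 0$, the triangle inequality for integrals gives $|R_0(z)|\le |z|\int_0^1 1\,dt=|z|$, which is exactly the claim for $n=0$.

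For the inductive step I would assume $|R_{n-1}(w)|\le \frac{|w|^n}{n!}$ for all $w$ with $\mathrm{Re}(w)\le 0$. The crucial observation is that for such a $z$ and any $t\in[0,1]$ the point $tz$ again lies in the closed left half-plane, since $\mathrm{Re}(tz)=t\,\mathrm{Re}(z)\le 0$; thus the hypothesis applies under the integral sign. Inserting it into the recursion gives $|R_n(z)|\le |z|\int_0^1 |R_{n-1}(tz)|\,dt\le |z|\int_0^1 \frac{|tz|^n}{n!}\,dt=\frac{|z|^{n+1}}{n!}\int_0^1 t^n\,dt=\frac{|z|^{n+1}}{(n+1)!}$, completing the induction.

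There is no serious obstacle here: the only analytic input is the bound $|e^w|\le 1$ on the half-plane $\mathrm{Re}(w)\le 0$, and the hypothesis $\mathrm{Re}(z)\le 0$ enters precisely because it guarantees that the segment joining $0$ to $z$, along which the remainder is integrated, stays inside that half-plane. If one prefers to avoid induction, the same computation can be packaged into the single integral representation $R_n(z)=\frac{z^{n+1}}{n!}\int_0^1 (1-u)^n e^{uz}\,du$, after which bounding $|e^{uz}|\le 1$ and evaluating $\int_0^1(1-u)^n\,du=\frac{1}{n+1}$ delivers the result directly.
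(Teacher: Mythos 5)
Your proof is correct and takes exactly the route the paper indicates: the paper states the lemma ``is easy to prove by induction'' and omits the details, and your induction on $n$ via the recursion $R_n(z)=z\int_0^1 R_{n-1}(tz)\,dt$ (from $R_n'=R_{n-1}$, $R_n(0)=0$), combined with the bound $|e^w|\le 1$ on the closed left half-plane, supplies precisely those omitted details. The non-inductive packaging you mention, $R_n(z)=\frac{z^{n+1}}{n!}\int_0^1(1-u)^n e^{uz}\,du$, is an equally valid one-line alternative.
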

This lemma is easy to prove by induction, we omit the proof.

\begin{lemma}
\label{diagram}
    Let $(S, \cS)$ be a metric space with its Borel $\sigma$-field, and $(X_t^n, Y_t)$ be random variables on $S^2$ for $t \geq 0, n \in \NN$.  If the diagram 
\begin{align*}
                    & X_t^n      &     &\underset{t \rightarrow \infty }{\overset{(d)}{\xrightarrow{\hspace*{3cm}}}} &     & Y^n \\
  \PP, \mathrm{unif \, in \,} t  \, & \Bigg\downarrow \, n \rightarrow \infty &  &   &  (d)  &\Bigg\downarrow  n \rightarrow \infty  \\
  & Y_t   &      &  &  &Y
\end{align*}
holds, then $Y_t \claw Y.$
    
\end{lemma}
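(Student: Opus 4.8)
The plan is to recognize this as the standard ``approximation'' or ``converging-together'' lemma (in the spirit of Billingsley's Theorem~3.2) and to reduce it to the Portmanteau characterization of weak convergence. Let $d$ denote the metric on $S$, fix an arbitrary closed set $F \subseteq S$, and for $\eps > 0$ write $F^\eps = \lacc x \in S : d(x,F) \leq \eps \racc$ for its closed $\eps$-neighborhood. The elementary set inclusion $\lacc Y_t \in F \racc \subseteq \lacc X_t^n \in F^\eps \racc \cup \lacc d(X_t^n, Y_t) > \eps \racc$ gives, for every $n$ and $\eps$,
$$ \PP(Y_t \in F) \leq \PP(X_t^n \in F^\eps) + \PP(d(X_t^n, Y_t) > \eps). $$
The whole argument then consists of sending the three parameters $t, n, \eps$ to their limits in the correct order, using one hypothesis of the diagram at each stage.

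First I would take $\limsup_{t\to\infty}$ with $n$ and $\eps$ held fixed. The top horizontal arrow $X_t^n \claw Y^n$ and the Portmanteau theorem applied to the closed set $F^\eps$ yield $\limsup_{t\to\infty}\PP(X_t^n \in F^\eps) \leq \PP(Y^n \in F^\eps)$, while the uniform-in-$t$ hypothesis of the left vertical arrow bounds the error term by $\rho_n(\eps) := \sup_{t \geq 0}\PP(d(X_t^n, Y_t) > \eps)$, which does not depend on $t$. This gives $\limsup_{t\to\infty}\PP(Y_t \in F) \leq \PP(Y^n \in F^\eps) + \rho_n(\eps)$. Next I would let $n \to \infty$: the uniform convergence in probability forces $\rho_n(\eps) \to 0$, and the right vertical arrow $Y^n \claw Y$ with Portmanteau (again for the closed set $F^\eps$) gives $\limsup_{n\to\infty}\PP(Y^n \in F^\eps) \leq \PP(Y \in F^\eps)$, leaving $\limsup_{t\to\infty}\PP(Y_t \in F) \leq \PP(Y \in F^\eps)$. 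Finally, letting $\eps \downarrow 0$ and using that $F$ is closed, so that $\bigcap_{\eps > 0} F^\eps = F$, continuity from above of the law of $Y$ gives $\PP(Y \in F^\eps) \downarrow \PP(Y \in F)$. Hence $\limsup_{t\to\infty}\PP(Y_t \in F) \leq \PP(Y \in F)$ for every closed $F$, which by Portmanteau is exactly the assertion $Y_t \claw Y$.

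The hard part will be the bookkeeping of the limit order rather than any deep estimate: the limit $t\to\infty$ must be taken \emph{before} $n\to\infty$, because the top arrow only controls the $t$-behavior for each fixed $n$, and it is precisely the uniformity in $t$ of the left arrow that makes the error $\rho_n(\eps)$ a constant that survives the inner $\limsup_t$ and then vanishes as $n\to\infty$. Without the uniformity one could not detach the error bound from the $t$-limit, so the hypothesis as stated is exactly what legitimizes the interchange; the three applications of Portmanteau and the final continuity-from-above step are then routine.
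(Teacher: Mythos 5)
Your proof is correct and complete: the paper itself gives no argument for this lemma, instead citing Billingsley's Theorem~3.2 (Chapter~1), and your chain of reasoning --- the inclusion $\{Y_t \in F\} \subseteq \{X_t^n \in F^\eps\} \cup \{d(X_t^n,Y_t) > \eps\}$, Portmanteau applied to the closed sets $F^\eps$, the limits taken in the order $t \to \infty$, then $n \to \infty$, then $\eps \downarrow 0$, and finally continuity from above with $\bigcap_{\eps>0}F^\eps = F$ --- is precisely the standard proof of that cited theorem. You also correctly isolate the role of the uniformity in $t$, namely that it makes the error $\rho_n(\eps)=\sup_{t}\PP(d(X_t^n,Y_t)>\eps)$ survive the inner $\limsup_{t}$; in fact your argument proves the lemma under the slightly weaker hypothesis $\lim_{n}\limsup_{t}\PP(d(X_t^n,Y_t)>\eps)=0$, which is the form in which Billingsley states it.
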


For a proof of this lemma, see \cite[Chapter 1, Theorem 3.2]{Billingsley1999}.  

\begin{proof}[Proof of Proposition \ref{k-thItemConvergence}]
We first consider the case of $k=1$. We compute the characteristic function of $\gamma_t \bar{\omega}_t (\tilde{g}_t).$ Let $u \in \RR$, we have
\begin{equation*}
    \EE_\Q  \lcr  e^{iu\gamma_t \bar{\omega}_t (\tilde{g}_t)}  \rcr  = \exp \lpa \int_{[0,1]} \int_{\RR} t^{1+1/\alpha} ( e^{iu\gamma_t g(s,x) } - 1 -  iu\gamma_t g(s,x)  )  v_t dsdx \rpa .
\end{equation*}
By Lemma \ref{IneqExp}, we have 
$$ | t^{1+1/\alpha} ( e^{iu\gamma_t g(s,x) } - 1 -  iu\gamma_t g(s,x)  )  v_t | \leq t^{1+1/\alpha} v_t \gamma_t^2 \frac{u^2}{2} g(s,x)^2.$$
Since $g$ is square-integrable and assumptions (a)-(b) imply that $v_t \gamma_t^2 \sim t^{-(1+1/\alpha)} $, we can use the dominated convergence theorem. Assumption (b) implies that $\gamma_t \rightarrow 0$. Using again the asymptotic equivalence  $v_t \gamma_t^2 \sim t^{-(1+1/\alpha)} $, we conclude that the integrand converges pointwise to $-\frac{u^2}{2} g^2(s,x)$, therefore, as $t \rightarrow \infty,$
$$\EE_\Q  \lcr  e^{iu\gamma_t \bar{\omega}_t (\tilde{g}_t)}  \rcr \, \rightarrow \, \exp \lpa -||g||_2^2 u^2 /2 \rpa.$$
The limit is the Fourier transform of a centered Gaussian random variable of variance $||g||_2^2$, which has the same law as $I_1(g).$

We then consider the case of $k>1$.  Let $g^1, \cdots, g^k$ be the functions whose supports are disjoint, finite and measurable sets $A_1, \cdots, A_k \subset [0,1]\times \RR$, and consider 
\begin{equation}
    \label{prodctg}
    g(\s, \x) = g^1(s_1,x_1) \cdots g^k(s_k,x_k).
\end{equation}
The property of higher order Wiener--Itô integral (see \cite[Exercise 12.8]{LastPenrose2017lectures}) yields
$$ \gamma_t^k \bar{\omega}_t^{(k)} (\tilde{g}_t)  =  \prod_{i=1}^k \gamma_t \bar{\omega}_t (\tilde{g}_t^i). $$
From the $k = 1$ case, for every $i$, we have $ 
 \gamma_t \bar{\omega}_t (\tilde{g}_t^i) \claw I_1(g^i)$, as $t \rightarrow \infty$, which implies that
$$ \gamma_t^k \bar{\omega}_t^{(k)} (\tilde{g}_t)  \claw  \prod_{i=1}^k I_1(g^i) = I_k(g), \quad t \rightarrow \infty,$$
where the equality holds since the functions $g^i$ are orthogonal in $L^2$.

Denote by $V_k$ the linear subspace of $L^2(\Delta_k(1) \times \RR^k)$ spanned by the functions of the form (\ref{prodctg}). By the linearity of $\bar{\omega}_t^{(k)}$, the convergence in distribution
\begin{equation}
        \label{formula-k-thConveInV_k}
        \gamma_t^k \bar{\omega}_t^{(k)} (\tilde{g}_t) \claw  I_k(g)
    \end{equation}
is valid for every function $g$ in $V_k$.

Note that $V_k$ is dense in $L^2(\Delta_k(1) \times \RR^k)$. Then for any $g$ in $L^2(\Delta_k(1) \times \RR^k)$, there exists a sequence of functions $(g^n)_{n \geq 1}$ in $V_k$ that converges to $g$ in $L^2$ norm. 
We want to prove the following diagram:

\begin{align*}
                    & \gamma_t^k \bar{\omega}_t^{(k)} (\tilde{g}_t^n)      &     &\underset{t \rightarrow \infty }{\overset{(d)}{\xrightarrow{\hspace*{3cm}}}} &     & I_k (g^n) \\
  L^2, \text{unif in t}  \, & \Bigg\downarrow \, n \rightarrow \infty &  &   &  L^2 &\Bigg\downarrow   n \rightarrow \infty \\
  & \gamma_t^k \bar{\omega}_t^{(k)} (\tilde{g}_t)   &      &  &  &I_k(g)
\end{align*}
Using again the asymptotic equivalence  $v_t \gamma_t^2 \sim t^{-(1+1/\alpha)} $ and Proposition \ref{Covariancerelation}, there exists a constant $C$ such that 
\begin{align*}
& \EE_\Q \lcr (\gamma_t^k \bar{\omega}_t^{(k)} (\tilde{g}_t^n)  - \gamma_t^k \bar{\omega}_t^{(k)} (\tilde{g}_t))^2  \rcr  \\
=& v_t^k \gamma_t^{2k} ||(g^n-g)(\cdot/t, \cdot/t^{1/\alpha})||_{L^2(\Delta_k(1) \times \RR^k)}^2 \\
=&  v_t^k \gamma_t^{2k}  t^{k(1+1/\alpha)}||g^n-g||_{L^2(\Delta_k(1) \times \RR^k)}^2  \\
\leq& C ||g^n-g||_{L^2(\Delta_k(1) \times \RR^k)}^2 \\
\rightarrow& 0,  \qquad \text{as}  \quad n \rightarrow \infty.
\end{align*}
Similarly, 
$$  \EE \lcr ( I_k (g^n) - I_k (g) )^2   \rcr  = ||g^n-g||_{L^2(\Delta_k(1) \times \RR^k)}^2\rightarrow 0, $$
as $n \rightarrow \infty.$
We then conclude the proof by Lemma \ref{diagram}.
\end{proof}

\begin{corollary}[Corollary 5.2 in \cite{COSCO2019805}]
\label{CoroConvergence}
Let $(g^k)_{k\geq 0}$ be a family of functions defined on $\Delta_k(1)\times \RR^k$. If
 $\sum_{k = 0}^\infty || g^k ||_{\Delta_k(1)\times \RR^k}^2 < \infty$,
then the sum $ \sum_{k=0}^{\infty}  \gamma_t^k \bar{\omega}_t^{(k)} (\tilde{g}_t^k)$ is well defined, and as 
    $t \rightarrow \infty,$
$$ \sum_{k=0}^{\infty}  \gamma_t^k \bar{\omega}_t^{(k)} (\tilde{g}_t^k) \claw \sum_{k=0}^{\infty}  I_k(g^k).$$
\end{corollary}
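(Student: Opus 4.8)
The plan is to prove the statement by truncating the series and invoking the approximation criterion of Lemma~\ref{diagram}, with the truncation level $N$ playing the role of $n$. Set
$$S_t := \sum_{k=0}^{\infty}\gamma_t^k\bar\omega_t^{(k)}(\tilde g_t^k), \qquad S := \sum_{k=0}^{\infty}I_k(g^k),$$
and for $N\ge 0$ write the truncations $S_t^N := \sum_{k=0}^{N}\gamma_t^k\bar\omega_t^{(k)}(\tilde g_t^k)$ and $S^N := \sum_{k=0}^{N}I_k(g^k)$. I would then fill in the four corners of the diagram: $S_t^N\claw S^N$ as $t\to\infty$ (top); $S^N\to S$ in $L^2$ as $N\to\infty$ (right); $S_t^N\to S_t$ in $L^2(\Q)$ uniformly in $t$ as $N\to\infty$ (left); and the conclusion $S_t\claw S$ (bottom).

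The right-hand leg, together with the well-posedness of $S$, is immediate from the covariance relation for the Wiener integrals recalled in the proof of Proposition~\ref{Zalphabeta}: integrals of distinct orders are orthogonal and $\EE[I_k(g^k)^2]=\|g^k\|_{\Delta_k(1)\times\RR^k}^2$, so $\EE[(S-S^N)^2]=\sum_{k>N}\|g^k\|^2\to 0$. This shows that $S$ is a well-defined element of $L^2$ and that $S^N\to S$.

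For the top leg I would extend the argument of Proposition~\ref{k-thItemConvergence} from a single order to the finitely many orders $k=0,\dots,N$ at once. By density of the product functions \eqref{prodctg} and a preliminary ($L^2$-in-$t$) application of Lemma~\ref{diagram}, it suffices to treat each $g^k$ of product form, and after passing to a common refinement one may assume that every factor is the indicator of a cell drawn from a single finite family of pairwise disjoint sets $A_1,\dots,A_m\subset[0,1]\times\RR$. The product formula \cite[Exercise 12.8]{LastPenrose2017lectures} then writes every $\gamma_t^k\bar\omega_t^{(k)}(\tilde g_t^k)$, and hence $S_t^N$, as one fixed polynomial in the first-order integrals $\gamma_t\bar\omega_t(\tilde h^j_t)$ with $h^j=\mathbf 1_{A_j}$. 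These building blocks are independent for each $t$ (after rescaling their supports remain disjoint, so each is a measurable function of the restriction of $\omega_t$ to a distinct region), and each converges to a centered Gaussian of variance $|A_j|$ by the $k=1$ case of Proposition~\ref{k-thItemConvergence}; hence they converge jointly to independent Gaussians. The continuous mapping theorem then yields $S_t^N\claw S^N$.

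The left leg carries the real difficulty. Orthogonality of the $\bar\omega_t^{(k)}$ of distinct orders (Proposition~\ref{Covariancerelation}) and the scaling identity computed in the proof of Proposition~\ref{k-thItemConvergence} give the exact variance
$$\EE_\Q\big[(S_t-S_t^N)^2\big]=\sum_{k>N}\big(v_t\gamma_t^2\,t^{1+1/\alpha}\big)^k\,\|g^k\|^2,$$
and assumptions (a)--(b) yield $v_t\gamma_t^2\sim t^{-(1+1/\alpha)}$, so the per-order factor $a_t:=v_t\gamma_t^2\,t^{1+1/\alpha}$ tends to $1$. The main obstacle is to make this tail small \emph{uniformly} in $t$: since $a_t\to1$ but is not bounded by $1$ in general, for $t\ge t_0(\eps)$ one only gets $\EE_\Q[(S_t-S_t^N)^2]\le\sum_{k>N}(1+\eps)^k\|g^k\|^2$, so controlling the tail genuinely requires summability of $\sum_k(1+\eps)^k\|g^k\|^2$ rather than merely of $\sum_k\|g^k\|^2$. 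This is automatic in the intended application, where $g^k=\beta^kp^k$ and Proposition~\ref{Zalphabeta} shows that $\|g^k\|^2$ decays faster than any geometric rate; granting this, one obtains $\lim_{N\to\infty}\limsup_{t\to\infty}\EE_\Q[(S_t-S_t^N)^2]=0$, the series defining $S_t$ converges in $L^2(\Q)$, and Lemma~\ref{diagram} delivers $S_t\claw S$.
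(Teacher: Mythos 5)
Your overall strategy coincides with the intended proof: the paper itself gives no argument for this corollary (it is quoted from Cosco), but the proof it relies on is exactly your truncation scheme --- $S_t^N \claw S^N$ for each fixed $N$, $S^N \to S$ in $L^2$ by orthogonality of the $I_k$, an $L^2$ control of $S_t - S_t^N$, and then Lemma \ref{diagram}. Your top and right legs are correct as written. In particular, you are right that the top leg needs \emph{joint} convergence of the finitely many chaos orders, which a naive term-by-term citation of Proposition \ref{k-thItemConvergence} would not give; your reduction to a common refinement of disjoint cells, independence of the first-order integrals $\gamma_t\bar\omega_t(\mathbf{1}_{\tilde A_j^t})$ (disjoint supports of a Poisson process), and the continuous mapping theorem is the correct way to run the proof of Proposition \ref{k-thItemConvergence} for a vector of orders $k\le N$.

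The caveat you raise on the left leg is genuine, and it is a defect of the statement rather than of your argument. Assumptions (a)--(b) give only $a_t := v_t\gamma_t^2 t^{1+1/\alpha} \to 1$, not $a_t \le 1$, and by orthogonality $\EE_\Q\bigl[(S_t - S_t^N)^2\bigr] = \sum_{k>N} a_t^k \Vert g^k\Vert^2$ whenever the right-hand side is finite. With only $\sum_k \Vert g^k\Vert^2 < \infty$, the corollary's own well-definedness claim can fail: for instance $a_t = 1 + 1/\log t$ is compatible with (a)--(c), and taking $\Vert g^k\Vert^2 = k^{-2}$ makes $\sum_k a_t^k \Vert g^k\Vert^2 = \infty$ for every $t$, so $S_t$ does not exist as an $L^2(\Q)$ limit and the Billingsley tail condition has no chance. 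What the diagram argument actually proves is the corollary under the mildly stronger hypothesis $\sum_k \rho^k \Vert g^k\Vert^2 < \infty$ for some $\rho > 1$: then $a_t \le \rho$ for $t$ large, and $\limsup_t \EE_\Q[(S_t-S_t^N)^2] \le \sum_{k>N}\rho^k\Vert g^k\Vert^2 \to 0$ as $N\to\infty$. This strengthening costs the paper nothing: in the application in Theorem \ref{main}, $g^k = (\beta^*)^k p^k$ satisfies $\Vert g^k\Vert^2 \le C^k \bigl(\Gamma(k(1-\tfrac1\alpha)+1)\bigr)^{-1}$ by the computation in Proposition \ref{Zalphabeta}, and the dominating sequence of Lemma \ref{ConvergenceLemma}(2) used for the difference $\phi_t^k - (\beta^*)^k p^k$ has the same factorial decay, so summability against any geometric weight holds. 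In short: your proof is correct, it follows the same route the paper intends, and your observation pinpoints the precise hypothesis under which the corollary (here and in the cited source) should be read.
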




\begin{lemma}
\label{lemmabound}
Let $k$ be a positive integer.
\begin{itemize}
    \item[(1)] There exists a non-negative function $H_k \in L^1(\Delta_k(1))$ such that 
    $$ \int_{\RR^k} \lpa \epsilon^{-k} \EE_{\P}\lcr \prod_{i = 1}^{k} 
\chi_{s_i,x_i}^{\epsilon}(X)\rcr \rpa ^2 d\x \leq H_k(\s), \quad \forall \, \epsilon > 0, \forall \, \s \in \Delta_k(1).$$
    \item[(2)] As $\epsilon \rightarrow 0$, we have the pointwise convergence:
    $$ \epsilon^{-k} \EE_{\P}\lcr \prod_{i = 1}^{k} 
\chi_{s_i,x_i}^{\epsilon}(X)\rcr \rightarrow p^k(\s, \x), \quad \forall \, \s \in \Delta_k(1), \forall \, \x \in \RR^k.$$
\end{itemize}
\end{lemma}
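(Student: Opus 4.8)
The plan is to reduce both statements to elementary facts about the $\alpha$-stable transition density $p$ by first rewriting the expectation via the Markov property. Since $X$ starts at $0$ and $\s \in \Delta_k(1)$ means $0 < s_1 < \cdots < s_k$, the joint law of $(X_{s_1}, \ldots, X_{s_k})$ has density $p^k(\s, \y) = \prod_{i=1}^{k} p(s_i - s_{i-1}, y_i - y_{i-1})$ with $(s_0, y_0) = {\bf 0}$. Recalling that $\chi_{s,x}^{\epsilon}(X) = {\bf 1}_{\lacc |x - X_s| \leq \epsilon/2 \racc}$ is the indicator of an interval of length $\epsilon$, this gives
$$\EE_{\P}\lcr \prod_{i=1}^{k} \chi_{s_i,x_i}^{\epsilon}(X) \rcr = \int_{\prod_{i=1}^{k} [x_i - \epsilon/2,\, x_i + \epsilon/2]} p^k(\s, \y)\, d\y.$$
Dividing by $\epsilon^k$ then exhibits the quantity of interest as a convolution in the spatial variable, namely $\epsilon^{-k}\EE_{\P}\lcr \prod_i \chi_{s_i,x_i}^{\epsilon}(X) \rcr = \lpa p^k(\s, \cdot) * \rho_\epsilon^{\otimes k} \rpa(\x)$, where $\rho_\epsilon(u) = \epsilon^{-1}{\bf 1}_{\lacc |u| \leq \epsilon/2 \racc}$ is a box kernel normalized so that $\|\rho_\epsilon^{\otimes k}\|_{L^1(\RR^k)} = 1$ (using here that $\rho_\epsilon^{\otimes k}$ is even). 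In short, $\epsilon^{-k}\EE_{\P}[\cdots]$ is simply a box-mollification of $p^k(\s, \cdot)$.

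For part (1), I would apply Young's convolution inequality in the $\x$-variable. Since $\|\rho_\epsilon^{\otimes k}\|_{L^1} = 1$,
$$\int_{\RR^k}\lpa \epsilon^{-k}\EE_{\P}\lcr \prod_{i=1}^{k}\chi_{s_i,x_i}^{\epsilon}(X) \rcr \rpa^2 d\x = \| p^k(\s, \cdot) * \rho_\epsilon^{\otimes k}\|_{L^2(\RR^k)}^2 \leq \| p^k(\s, \cdot)\|_{L^2(\RR^k)}^2 =: H_k(\s),$$
and this bound is uniform in $\epsilon > 0$, as required. It then remains to check that $H_k \in L^1(\Delta_k(1))$; but $\int_{\Delta_k(1)} H_k(\s)\, d\s = \int_{\Delta_k(1)}\int_{\RR^k} p^k(\s, \x)^2\, d\x\, d\s$ is precisely the quantity estimated in the proof of Proposition \ref{Zalphabeta}, where it was shown to be at most $\big(M_\alpha \Gamma(1-1/\alpha)\big)^k \big(\Gamma(k - k/\alpha + 1)\big)^{-1} < \infty$. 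The finiteness of $H_k(\s)$ for each fixed $\s$ is likewise clear from the self-similar bound $p(t, \cdot) \leq M_\alpha\, t^{-1/\alpha}$ applied to each factor.

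For part (2), I would use that when $\s \in \Delta_k(1)$ every increment $s_i - s_{i-1}$ is strictly positive, so each factor $p(s_i - s_{i-1}, \cdot)$ is continuous (the symmetric $\alpha$-stable density is smooth and bounded at positive times), and hence $p^k(\s, \cdot)$ is continuous on $\RR^k$. The convolution $\lpa p^k(\s, \cdot) * \rho_\epsilon^{\otimes k} \rpa(\x)$ is exactly the average of this continuous function over the box $\prod_i [x_i - \epsilon/2,\, x_i + \epsilon/2]$ shrinking to $\x$, so it converges to the center value $p^k(\s, \x)$ as $\epsilon \to 0$ by a routine continuity ($\delta$-$\epsilon$) argument, or equivalently by the Lebesgue differentiation theorem.

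I expect no serious obstacle here: the entire lemma rests on the single observation that $\epsilon^{-k}\EE_{\P}[\cdots]$ is a box-mollification of $p^k$ in the spatial variable. The only points that genuinely require care are recording the correct $L^1$-normalization of the mollifier (so that Young's inequality introduces no $\epsilon$-dependent constant) and invoking continuity of the stable density at positive times to pass the shrinking average to the limit; the integrability $H_k \in L^1(\Delta_k(1))$ is already furnished by the computation in Proposition \ref{Zalphabeta}.
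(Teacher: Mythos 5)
Your proposal is correct and takes essentially the same route as the paper: both rewrite the expectation via the Markov property as a box-average (mollification) of $p^k(\s,\cdot)$ in the spatial variable, bound its squared $L^2$ norm by that of $p^k(\s,\cdot)$ (you via Young's inequality with the normalized box kernel, the paper via H\"older's inequality plus a change of variables --- the identical estimate in different packaging), deduce part (2) from continuity of the stable density at positive times, and obtain $H_k \in L^1(\Delta_k(1))$ from the computation already done in Proposition \ref{Zalphabeta}. The only cosmetic difference is that your choice $H_k(\s) = \| p^k(\s,\cdot)\|_{L^2(\RR^k)}^2$ is slightly sharper than the paper's explicit bound $M_\alpha^k \prod_{i=1}^{k}(s_i - s_{i-1})^{-1/\alpha}$, which is what your quantity gets bounded by anyway.
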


\proof  (1) By the Markov property of stable processes, 
\begin{align}
\label{Markovproperty}
  \nonumber    &  \epsilon^{-k} \EE_{\P}\lcr \prod_{i = 1}^{k} 
\chi_{s_i,x_i}^{\epsilon}(X)\rcr \\
   \nonumber   = & \epsilon^{-k} \int_{\RR^k} \prod_{i=1}^k {\bf 1}_{|x_i - y_i| \leq \epsilon/2} \,\, p(s_i-s_{i-1}, y_i - y_{i-1}) d\y   \\
     \vspace{5mm}
      = &  \int_{[-\frac{1}{2}, \frac{1}{2}]^k} \prod_{i=1}^k  p(s_i-s_{i-1}, x_i - x_{i-1} + \epsilon(u_i - u_{i-1})) d\u.  
\end{align}

By H\"older's inequality, 
\begin{align*}
      & \int_{\RR^k} \lpa \epsilon^{-k} \EE_{\P}\lcr \prod_{i = 1}^{k} 
\chi_{s_i,x_i}^{\epsilon}(X)\rcr \rpa ^2 d\x \\
     \leq  & \int_{\RR^k} \int_{[-\frac{1}{2}, \frac{1}{2}]^k} \prod_{i=1}^k  p(s_i-s_{i-1}, x_i - x_{i-1} + \epsilon(u_i - u_{i-1}))^2 d\u  d\x  \\
      =  & \int_{\RR^k} \prod_{i=1}^k  p(s_i-s_{i-1}, z_i - z_{i-1} )^2 d\z   \\
    \leq &  M_\alpha^k  \int_{\RR^k} \prod_{i=1}^{k} (s_{i}-s_{i-1})^{-\frac{2}{\alpha}} p (1, 
 (s_{i}-s_{i-1})^{-\frac{1}{\alpha}}(z_{i} - z_{i-1}))   d\z  \\
    = &  M_\alpha^k  \prod_{i=1}^{k} (s_{i}-s_{i-1})^{-\frac{1}{\alpha}}, 
\end{align*}
where we have done a change of variable $z_i = x_i + \epsilon u_i$.

 We define $$ H_k(\s) = M_\alpha^k  \prod_{i=1}^{k} (s_{i}-s_{i-1})^{-\frac{1}{\alpha}}.$$
We saw from the proof of Proposition \ref{Zalphabeta} that $H_k$ is in $ L^1(\Delta_k(1))$. Thus $H_k$ is a desired function.

(2) This result follows immediately from equality (\ref{Markovproperty}).
\endproof 

\begin{lemma}
\label{ConvergenceLemma}
Let $k$ be a positive integer.
\begin{itemize}
    \item[(1)] As $t \rightarrow \infty$,
    $$ ||\phi_t^k  - (\beta^*)^k p^k||_{L^2(\Delta_k(1) \times \RR^k)} \rightarrow 0.$$ 
    \item[(2)] There exists a positive constant $C(\beta^*)$ such that 
    $$ \sup_{t\in [0,\infty)} ||\phi_t^k||_{L^2([0,1]^k\times \RR^k)} \leq  C(\beta^*)^k  \big(M_\alpha \Gamma(1-\frac{1}{\alpha})  \big)^k   \Big(\Gamma(k - \frac{k}{\alpha} + 1)  \Big)^{-1} .$$
\end{itemize} 
\end{lemma}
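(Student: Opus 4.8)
The plan is to factor $\phi_t^k$ into a deterministic scalar prefactor times a rescaled occupation kernel and to treat the two separately. Write $\epsilon_t := r_t/t^{1/\alpha}$ and
$$\psi_t^k(\s,\x) := \epsilon_t^{-k}\,\EE_\P\lcr\prod_{i=1}^k\chi_{s_i,x_i}^{\epsilon_t}(X)\rcr{\bf 1}_{\Delta_k(1)}(\s),\qquad c_t := \gamma_t^{-1}\lambda(\beta_t)\,\epsilon_t,$$
so that, directly from \eqref{Defphi}, $\phi_t^k = c_t^{\,k}\,\psi_t^k$. The whole argument then rests on two facts: (i) $c_t\to\beta^*$, and (ii) $\psi_t^k\to p^k$ pointwise on $\Delta_k(1)\times\RR^k$. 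Fact (ii) is exactly Lemma \ref{lemmabound}(2), which applies because assumption (c) gives $\epsilon_t\to 0$.

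For fact (i), I would substitute the definition \eqref{gamma_t} of $\gamma_t$ and collect exponents. The powers of $r_t$ and of $\lambda(\beta_t)$ both reduce to $-2/(\alpha-1)$, and one is left with
$$c_t = (\beta^*)^{(\alpha+1)/(\alpha-1)}\,t^{-1/\alpha}\,\big(v_t r_t^2\lambda(\beta_t)^2\big)^{-1/(\alpha-1)}.$$
Plugging in assumption (a), $v_tr_t^2\lambda(\beta_t)^2\sim(\beta^*)^2 t^{-(1-1/\alpha)}$, and using $(1-1/\alpha)/(\alpha-1)=1/\alpha$, the powers of $t$ cancel and the powers of $\beta^*$ collapse to $(\beta^*)^{(\alpha-1)/(\alpha-1)}=\beta^*$, so $c_t\to\beta^*$. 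This step is routine bookkeeping.

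The main obstacle is upgrading the pointwise convergence in (ii) to the $L^2$ convergence required by part (1): the only available domination, Lemma \ref{lemmabound}(1), controls the $\x$-integral rather than the integrand itself, so a direct application of dominated convergence on $\Delta_k(1)\times\RR^k$ is not possible. I would proceed through convergence of norms. Set $F_t(\s):=\int_{\RR^k}(\psi_t^k)^2\,d\x$ and $F(\s):=\int_{\RR^k}(p^k)^2\,d\x$. The Jensen inequality and the change of variables $z_i=x_i+\epsilon_t u_i$ already carried out in the proof of Lemma \ref{lemmabound}(1) in fact give the \emph{uniform} bound $F_t(\s)\le F(\s)$ for every $t$ and $\s$, while Fatou's lemma applied to $(\psi_t^k(\s,\cdot))^2\to(p^k(\s,\cdot))^2$ gives $F(\s)\le\liminf_t F_t(\s)$; together these force $F_t(\s)\to F(\s)$ for each $\s$. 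Since $F_t\le F\le H_k\in L^1(\Delta_k(1))$ by Lemma \ref{lemmabound}(1), dominated convergence in $\s$ yields $\|\psi_t^k\|_2^2=\int_{\Delta_k(1)}F_t\to\int_{\Delta_k(1)}F=\|p^k\|_2^2$. Now pointwise convergence $\psi_t^k\to p^k$ together with $\|\psi_t^k\|_2\to\|p^k\|_2$ gives, by the Riesz--Scheff\'e theorem, $\|\psi_t^k-p^k\|_{L^2(\Delta_k(1)\times\RR^k)}\to0$. Part (1) then follows from the triangle inequality
$$\|\phi_t^k-(\beta^*)^kp^k\|_2\le c_t^{\,k}\,\|\psi_t^k-p^k\|_2+|c_t^{\,k}-(\beta^*)^k|\,\|p^k\|_2,$$
whose right-hand side tends to $0$ because $c_t\to\beta^*$, $\|p^k\|_2<\infty$, and $c_t^{\,k}$ is bounded.

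For part (2) the same uniform bound does all the work: since $\phi_t^k=c_t^{\,k}\psi_t^k$,
$$\|\phi_t^k\|_{L^2([0,1]^k\times\RR^k)}^2 = c_t^{\,2k}\int_{\Delta_k(1)}F_t(\s)\,d\s \le \Big(\sup_{t\ge0}c_t\Big)^{2k}\int_{\Delta_k(1)}\int_{\RR^k}(p^k)^2\,d\x\,d\s,$$
and the last double integral is bounded by $(M_\alpha\Gamma(1-1/\alpha))^k\big(\Gamma(k-k/\alpha+1)\big)^{-1}$ exactly as in the proof of Proposition \ref{Zalphabeta}. Because $c_t\to\beta^*$ as $t\to\infty$ and $c_t$ stays bounded on bounded $t$-ranges, $\sup_{t\ge0}c_t<\infty$; setting $C(\beta^*):=(\sup_{t\ge0}c_t)^2$ gives $\|\phi_t^k\|_{L^2}^2\le C(\beta^*)^k(M_\alpha\Gamma(1-1/\alpha))^k\big(\Gamma(k-k/\alpha+1)\big)^{-1}$, which is the estimate of part (2). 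The only genuinely delicate point in the lemma is the $L^2$ upgrade in part (1); the prefactor convergence in (i) and the estimate in (2) are routine once Lemma \ref{lemmabound} is in hand.
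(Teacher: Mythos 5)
Your proof is correct, and it rests on the same two ingredients as the paper's --- the scalar asymptotics $c_t=\gamma_t^{-1}\lambda(\beta_t)r_t/t^{1/\alpha}\to\beta^*$ (this is exactly the paper's relation (\ref{ConvLemma2}), and your exponent bookkeeping from assumptions (a)--(b) checks out) and Lemma \ref{lemmabound} --- but the mechanism by which you upgrade pointwise convergence to $L^2$ convergence in part (1) is genuinely different, and in fact more complete than what the paper writes. The paper bounds the inner integral $\int_{\RR^k}\lpa\phi_t^k-(\beta^*)^kp^k\rpa^2d\x$ by $2\int_{\RR^k}(\phi_t^k)^2d\x+2(\beta^*)^{2k}\int_{\RR^k}(p^k)^2d\x$ and then invokes ``Lemma \ref{lemmabound}(1) and the dominated convergence theorem''. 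Read literally this is incomplete, for precisely the reason you diagnose: the exhibited dominating function depends on $t$, and a dominated convergence argument in $\s$ alone would require knowing beforehand that the inner $\x$-integral tends to $0$ for each fixed $\s$, which the paper never establishes. Your argument supplies exactly the missing piece: the $t$-uniform bound $F_t(\s)\le F(\s)$ (correctly extracted from the intermediate line in the proof of Lemma \ref{lemmabound}(1), before the final bound by $H_k$), combined with Fatou, squeezes out $F_t(\s)\to F(\s)$ and hence convergence of $L^2$ norms; Riesz--Scheff\'e then turns a.e.\ convergence plus norm convergence into $L^2$ convergence, and the triangle inequality disposes of the scalar factor $c_t^k$. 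Equivalently, your norm convergence is exactly the hypothesis $\int g_t\to\int g$ needed to run a generalized (Pratt-type) dominated convergence theorem on the paper's own domination, so your proof can be read as the rigorous completion of the paper's sketch. Two blemishes you share with, rather than introduce relative to, the paper: in part (2) you actually bound the squared norm $\|\phi_t^k\|_{L^2}^2$ by the stated quantity rather than the norm itself (the paper's own proof commits the same norm-versus-squared-norm slip; either version gives the summability in $k$ that Theorem \ref{main} requires), and the finiteness of $\sup_{t\ge0}c_t$ tacitly uses local boundedness of the parameter functions at finite $t$, which is only implicit in the paper since assumptions (a)--(c) are purely asymptotic.
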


\proof
(1)
Assumptions (a) and (b) imply that
\begin{equation}
\label{ConvLemma2}
    \gamma_t^{-1}\lambda(\beta_t) \; \sim \; \beta^* \, t^{1/\alpha}/ r_t , \quad  t \rightarrow \infty.
\end{equation}
Combining Assumption (c) and Lemma \ref{lemmabound} (2), we have the pointwise convergence
$$  \phi_t^k(\s,\x) \rightarrow  (\beta^*)^k p^k(\s, \x),  \quad \text{as} \;\, t \rightarrow \infty, $$
for all $\s \in \Delta_k(1)$ and all $ \x \in \RR^k.$
A simple computation shows 
$$ \int_{\RR^k} \lpa \phi_t^k(\s,\x) - (\beta^*)^k p^k(\s, \x) \rpa^2 d\x \leq \int_{\RR^k} 2 \phi_t^k(\s,\x)^2 +2 (\beta^*)^{2k} p^k(\s, \x)^2 d\x.$$

By Lemma \ref{lemmabound} (1) and the dominated convergence theorem, we obtain the convergence
$$ ||\phi_t^k  - (\beta^*)^k p^k||_{L^2(\Delta_k(1) \times \RR^k)}^2  = \int_{\Delta_k(1)} \int_{\RR^k} \lpa \phi_t^k(\s,\x) - (\beta^*)^k p^k(\s, \x) \rpa^2 d\x d\s \rightarrow 0,   \quad t \rightarrow \infty .$$

(2) By the asymptotic equivalence (\ref{ConvLemma2}) and Lemma \ref{lemmabound} (1), there exists a constant $C(\beta^*)$ such that
$$ ||\phi_t^k||_{L^2([0,1]^k\times \RR^k)} \leq C(\beta^*)^k \int_{\Delta_k(1)} H_k (\s) d\s.$$
By the same calculation in the proof of Proposition \ref{Zalphabeta}, we then finish the proof.
\endproof

\subsection{Proof of Theorem \ref{main}} 
\proof 
We have seen from the proof of Proposition \ref{Zalphabeta} that 
 $\sum_{k = 0}^\infty || (\beta^*)^k p^k ||_{\Delta_k(1)\times \RR^k}^2 < \infty$ . By Corollary \ref{CoroConvergence}, as $t \rightarrow \infty,$
$$ \sum_{k=0}^{\infty} (\beta^*)^k \gamma_t^k \bar{\omega}_t^{(k)}( \tilde{p}^k_t )\claw \sum_{k=0}^{\infty}  I_k((\beta^*)^k p^k)  = \cZ_{\alpha, \beta^*} . $$
It is known that, for real random variables $X_n$ and $Y_n$, if $X_n \claw X$ and $||X_n - Y_n||_2 \rightarrow 0$, then $Y_n \claw X $ (see \cite[Theorem 3.1]{Billingsley1999}). 
Hence, it remains to prove that 
\begin{equation}
\label{EqThm1}
    || \sum_{k=0}^{\infty}  \gamma_t^k \bar{\omega}_t^{(k)}( \tilde{\phi}^k_t ) - \sum_{k=0}^{\infty} (\beta^*)^k \gamma_t^k \bar{\omega}_t^{(k)}( \tilde{p}^k_t ) ||_2^2  \rightarrow 0,  \, t \rightarrow \infty. 
\end{equation}
By the linearity of $\bar{\omega}_t^{(k)}$ and Proposition \ref{Covariancerelation},  the left side can be written as
$$ \sum_{k=0}^{\infty}  \gamma_t^{2k} ||\bar{\omega}_t^{(k)}( \tilde{\phi}^k_t  -  (\beta^*)^k \tilde{p}^k_t) ||_2^2 =   \sum_{k=0}^{\infty}  \gamma_t^{2k}v_t^k t^{k(1+1/\alpha)} ||\phi^k_t  -  (\beta^*)^k p^k_t||^2_{L^2(\Delta_k(1) \times \RR^k)}. $$
Assumptions (a) and (b) yields $v_t \gamma_t^2 \sim t^{-(1+1/\alpha)}$. By Lemma \ref{ConvergenceLemma}, the summand tends to zero and is dominated by a summable sequence, so (\ref{EqThm1}) is valid by the dominated convergence. We therefore finish the proof.
\endproof

\subsection{Proof of Theorem \ref{main2}} 
\proof 
The proof is almost the same as that of Theorem \ref{main}, so we only sketch it.

\begin{itemize}
\item[(i)] 
    Let $g \in L^2(\Delta_k(T) \times \RR^k)$, for $k \geq 1, T > 0$. We define
    $$\tilde{g}_t (\s,\x) :=  g (\s / t, \x / t^{1/\alpha}), $$
     which is a function on $\Delta_k(tT) \times \RR^k$. Following the lines of the proof of Proposition \ref{k-thItemConvergence}, we can show that
    as $t \rightarrow \infty,$
    \begin{equation*}
        T^{k(1+\frac{1}{\alpha})} \gamma_{tT}^k \bar{\omega}_{tT}^{(k)} (\tilde{g}_t) \claw  I_{k,T}(g).
    \end{equation*}
\item[(ii)] Let $(g^k)_{k\geq 0}$ be a family of functions defined on $\Delta_k(T)\times \RR^k$. If
 $\sum_{k = 0}^\infty || g^k ||_{\Delta_k(T)\times \RR^k}^2 < \infty$, then the sum $ \sum_{k=0}^{\infty} T^{k(1+\frac{1}{\alpha})}  \gamma_{tT}^k \bar{\omega}_{tT}^{(k)} (\tilde{g}_t^k)$ is well defined, and as
    $t \rightarrow \infty,$
$$ \sum_{k=0}^{\infty}  T^{k(1+\frac{1}{\alpha})}  \gamma_{tT}^k \bar{\omega}_{tT}^{(k)} (\tilde{g}_t^k) \claw \sum_{k=0}^{\infty} I_{k,T}(g^k).$$
The above is a generalization of Corollary \ref{CoroConvergence}.
\item[(iii)]

As $\epsilon \rightarrow 0$, 
 $$ \epsilon^{-k} \EE_{\P}\lcr \prod_{i = 1}^{k} 
\chi_{s_i,x_i}^{\epsilon}(X) | X_T = Y \rcr p(T, Y) \rightarrow p^k(\s, \x; T, Y), $$
for all $ \s \in \Delta_k(T) $ and all $\x \in \RR^k.$
Moreover, we can prove an analogue of Lemma \ref{lemmabound} by modifying $H_k$, which can be found easily following the lines of the proof of Lemma \ref{lemmabound}. The integrability of $H_k$  can be deduced from the proof of Proposition \ref{SolutionSHE}. 
\item[(iv)]
Define 
\begin{equation*}
    \psi_t^k(\s,\x; T,Y) = \gamma_{tT}^{-k}\lambda(\beta_{tT})^k p(T,Y) \EE_{\P}\lcr \prod_{i = 1}^{k} \chi_{s_i,x_i}^{r_{tT}/t^{1/\alpha}}(X) | X_T = Y \rcr {\bf 1}_{\Delta_k(T)}(\s).
\end{equation*}
For all $k \geq 0$, as $t \rightarrow \infty$,
    $$ ||\psi_t^k(\cdot,\cdot; T,Y)  - T^{k/\alpha} (\beta^*)^k p^k(\cdot,\cdot; T,Y)||_{L^2(\Delta_k(T) \times \RR^k)} \rightarrow 0.$$ 
Moreover, we can prove an analogue of Lemma \ref{ConvergenceLemma} (2), in which the upper bound is given by the integral of $H_k$ in (iii). The bound is also a summable sequence. 
\item[(v)] 
Using again Proposition \ref{chaosexpansion}, similar with Proposition \ref{WienerIto}, we have
\begin{align*}
    &  t^{1/\alpha}  W_{tT, t^{1/\alpha} Y}(\omega^{v_{tT}}, \beta_{tT}, r_{tT}) \\
    &=  t^{1/\alpha} \sum_{k=0}^{\infty} \frac{1}{k!}\bar{\omega}^{(k)}_{tT} \lpa \lambda(\beta_{tT})^k \EE_{\P}\lcr \prod_{i = 1}^{k} \chi_{s_i,x_i}^{r_{tT}}(X) \, | \,  X_{tT} =  t^{1/\alpha} Y  \rcr p(tT, t^{1/\alpha} Y) \rpa \\
   &= \sum_{k=0}^{\infty} \gamma_{tT}^k \bar{\omega}_{tT}^{(k)} (\tilde{\psi}_t^k(\cdot, \cdot; T,Y)),
\end{align*}
where 
\begin{align*}
   \tilde{\psi}_t^k(\s,\x; T,Y) &= \psi_t^k(\s /t,\x /t^{1/\alpha}; T, Y) \\
   &= \gamma_{tT}^{-k}\lambda(\beta_{tT})^k p(T, Y) \EE_{\P}\lcr \prod_{i = 1}^{k} \chi_{s_i/t,x_i/t^{1/\alpha}}^{r_{tT}/t^{1/\alpha}}(X) | X_{T} = Y \rcr {\bf 1}_{\Delta_k(tT)}(\s).
\end{align*}
\end{itemize}

By the above (ii), we have 
$$ \sum_{k=0}^{\infty} \gamma_{tT}^k \bar{\omega}_{tT}^{(k)}( T^{k/\alpha}  (\beta^*)^k \tilde{p}^k_t(\cdot, \cdot; T,Y))\claw \sum_{k=0}^{\infty}  I_{k,T}((\beta^*/T)^k p^k(\cdot, \cdot; T,Y))  = \cZ_{\alpha, \beta^*/T} (T, Y), $$
where 
$$\tilde{p}_t^k(\s,\x; T,Y) = p^k(\s /t,\x /t^{1/\alpha}; T, Y). $$

Then it remains to prove a convergence similar with (\ref{EqThm1}), namely

\begin{equation*}
    ||  \sum_{k=0}^{\infty} \gamma_{tT}^k \bar{\omega}_{tT}^{(k)} (\tilde{\psi}_t^k(\cdot, \cdot; T,Y)) - \sum_{k=0}^{\infty} \gamma_{tT}^k \bar{\omega}_{tT}^{(k)}( T^{k/\alpha}  (\beta^*)^k \tilde{p}^k_t(\cdot, \cdot; T,Y)) ||_2^2  \rightarrow 0,  \, t \rightarrow \infty. 
\end{equation*}
Using exactly the same argument, with the help of (i)-(v), we can finish the proof. 
\endproof




\bigskip

\noindent
\textbf{Acknowledgements.}
I would like to thank Prof. Fuqing Gao for many fruitful discussions.

\bibliographystyle{plain} 
\bibliography{SDP}
\end{document}